\documentclass[10pt,reqno]{amsart}

\usepackage{a4wide}
\usepackage{amssymb}
\usepackage{dsfont} 	% provides \mathds font used in \I
\usepackage[all]{xy}
\usepackage{todonotes}

\newtheorem{proposition}{Proposition}%[section]
  \newtheorem{theorem}[proposition]{Theorem}
  
  \newtheorem{corollary}[proposition]{Corollary}
\theoremstyle{definition}
  \newtheorem{definition}[proposition]{Definition}
  \newtheorem{remark}[proposition]{Remark}
  
\numberwithin{equation}{section}
\numberwithin{proposition}{section}

\DeclareMathOperator{\Mor}{Mor}
\DeclareMathOperator{\M}{\mathsf{M}}
\DeclareMathOperator{\C}{C}

\newcommand{\cst}{\ifmmode\mathrm{C}^*\else{$\mathrm{C}^*$}\fi}
\newcommand{\ph}{\varphi}
\newcommand{\tens}{\otimes}
\newcommand{\id}{\mathrm{id}}
\newcommand{\comp}{\circ}
\newcommand{\qqquad}{\quad\qquad}
\newcommand{\I}{\mathds{1}}

\newcommand{\CC}{\mathbb{C}}
\renewcommand{\SS}{\mathbb{S}}
\newcommand{\GG}{\mathbb{G}}
\newcommand{\KK}{\mathbb{K}}

\newcommand{\sA}{\mathsf{A}}
\newcommand{\sB}{\mathsf{B}}
\newcommand{\sX}{\mathsf{X}}
\newcommand{\sY}{\mathsf{Y}}

% this paper only
\newcommand{\tDelta}{\Delta^{\!**\!}}

\newcommand{\bh}{\boldsymbol{h}}

\raggedbottom

\begin{document}

\title[Kawada-It\^o-Kelley Theorem]{Kawada-It\^o-Kelley Theorem for Quantum Semigroups}

\author{Pawe{\l} Kasprzak}
\address{Department of Mathematical Methods in Physics, Faculty of Physics, University of Warsaw, Poland}
\email{pawel.kasprzak@fuw.edu.pl}

\author{Fatemeh Khosravi}
\address{School of Mathematics, Institute for Research in Fundamental Sciences (IPM), P.O. box: 19395--5746, Tehran, Iran}
\email{f.khosravi@ipm.ir}

\author{Piotr M.~So{\l}tan}
\address{Department of Mathematical Methods in Physics, Faculty of Physics, University of Warsaw, Poland}
\email{piotr.soltan@fuw.edu.pl}

\begin{abstract}
Idempotent states on locally compact quantum semigroups with weak cancellation properties are shown to be Haar states on a certain sub-object described by an operator system with comultiplication. We also give a characterization of the situation when this sub-object is actually a compact quantum subgroup. In particular we reproduce classical results on idempotent probability measures on locally compact semigroups with cancellation.
\end{abstract}

\maketitle

\section{Introduction}\label{Intro}

Idempotent measures on locally compact groups are a classical topic on the intersection of probability theory and harmonic analysis. The book of Heyer \cite{Heyer} contains a detailed account of the progress on characterization of idempotent measures on more and more general classes of groups (\cite[Page 79]{Heyer}). The final result which in its most general form is due to Kelley (\cite{Kelley}) and was earlier proved by Rudin, Cohen (for abelian locally compact groups \cite{Rudin,Cohen}) and Kawada and It\^o (for compact groups \cite{Kawada-Ito}) states that a probability measure $\mu$ on a locally compact group $G$ which satisfies $\mu\star\mu=\mu$ must necessarily arise as the Haar measure of a compact subgroup of $G$. We will refer to this result as the Kawada-It\^o-Kelley theorem.

Quite early in the development of the theory of compact quantum groups it was noticed by Pal in \cite{Pal} that the analogous result is not true for compact (and even for finite) quantum groups. Pal provides an example of an idempotent state on the Kac-Paljutkin quantum group (\cite{Kac-Paljutkin}) which does not come from a quantum subgroup (it is easy to see that also the dual of any finite group with a non-normal subgroup will provide an example of such a state).

Soon it was realized that the set of idempotent states on a given locally compact quantum group $\GG$ behaves in a way analogous to the set of all compact quantum subgroups of $\GG$. This idea was pursued e.g.~in \cite{Baaj-Blanchard-Skandalis,Franz-Skalski2009} for finite quantum groups and in \cite{lattice} for all locally compact quantum groups.

The aim of this paper is to extend \cite[Theorem 4.4]{Franz-Skalski2009} to objects which we call quantum semigroups with weak cancellation. In particular this class contains all locally compact quantum groups.

Theory of idempotent measures on a locally compact semigroups was developed in a number of papers, see e.g.~\cite{pym1962}, \cite{Sun-Tserpes} and culminates in \cite{Mukherjea_Tserpes}. The latter contains the following structural characterization of idempotent measures: let $S$ be a locally compact semigroup and $\omega\in\C_0(S)^*$ an idempotent state with the support $S_\omega\subset{S}$. Then $S_\omega$ is a closed subsemigroup and there are locally compact spaces $X,Y$ equipped with Borel probability measures $\mu_X$ and $\mu_Y$, a compact group $G$, a map $\phi:Y\times{X}\to{G}$ such that:
\begin{itemize}
\item $S_\omega$ can be identified with $X\times{G}\times{Y}$ with product in the latter semigroup is defined by $(x_1,g_1,y_1)(x_2,g_2,y_2)=\bigl(x_1,g_1\phi(y_1,x_2)g_2,y_2\bigr)$;
\item under the above identification $\omega$ is of the form $\mu_X\times\mu_G\times\mu_Y$, where $\mu_G$ is the Haar measure on $G$.
\end{itemize}
Note that if $S$ has cancellation laws then $X$ and $Y$ must be singletons. In particular the support of $\omega$ is a compact subgroup of $S$ and $\omega$ is the Haar measure on it. It follows that the obvious version of Kawada-It\^o-Kelley theorem holds in this case.

In our work we study idempotent states on locally compact quantum semigroups satisfying the analogs of cancellation laws. Although we formulate a relatively weak condition implying compactness of the support of such an idempotent state (cf.~Proposition \ref{ecpt}), we were not able to prove this compactness in general.

The next definition uses the notion of a ``quantum space'' defined as an object of the category dual to the category of \cst-algebras with \emph{morphisms of \cst-algebras} (see Section \ref{cstSect}) as morphisms. In this language the phrase ``let $\mathbb{X}$ be a quantum space'' has exactly the same meaning as ``let $\C_0(\mathbb{X})$ be a \cst-algebra''. The tensor product of \cst-algebras and more generally of operator systems is the minimal one throughout the paper.

\begin{definition}
A \emph{quantum semigroup} is a quantum space $\SS$ such that the \cst-algebra $\C_0(\SS)$ is equipped with $\Delta_\SS\in\Mor(\C_0(\SS),\C_0(\SS)\tens\C_0(\SS))$ which is \emph{coassociative} i.e.
\[
(\Delta_\SS\tens\id)\comp\Delta_\SS=(\id\tens\Delta_\SS)\comp\Delta_\SS.
\]
\end{definition}

If $\SS$ is a quantum semigroup then the Banach space $\C_0(\SS)^*$ can be equipped with a Banach algebra structure provided by the \emph{convolution product}, i.e.~the map
\[
\C_0(\SS)^*\times\C_0(\SS)^*\ni(\mu,\nu)\longmapsto\mu\star\nu=(\mu\tens\nu)\comp\Delta_\SS.
\]
In particular an \emph{idempotent state} on $\SS$ is by definition a state $\omega\in\C_0(\SS)^*$ such that $\omega\star\omega=\omega$. In what follows we will also use the standard $\C_0(\SS)$-bimodule structure on $\C_0(\SS)^*$ given by
\[
(a\cdot\mu)(b)=\mu(ba)=(\mu\cdot{b})(a),\qqquad{a,b}\in\C_0(\SS),\mu\in\C_0(\SS)^*.
\]
There are also the operations of convolution of elements of $\C_0(\SS)^*$ and $\C_0(\SS)$ defined by
\begin{align*}
\C_0(\SS)^*\times\C_0(\SS)\ni(\mu,a)&\longmapsto\mu\star{a}=(\id\tens\mu)\Delta_\SS(a)\in\M(\C_0(\SS)),\\
\C_0(\SS)\times\C_0(\SS)^*\ni(b,\nu)&\longmapsto{b}\star\nu=(\nu\tens\id)\Delta_\SS(b)\in\M(\C_0(\SS))
\end{align*}
(see Section \ref{IdemSt}).

\begin{definition}
Let $\SS$ be a quantum semigroup. We say that $\SS$
\begin{enumerate}
\item has \emph{weak cancellation} if for any non zero $\mu\in\C_0(\SS)^*$ the sets
\[
\bigl\{b\star(\mu\cdot{a})\,\bigl|\bigr.\,a,b\in\C_0(\SS)\bigr\}
\quad\text{and}\quad
\bigl\{(b\cdot\mu)\star{a}\,\bigl|\bigr.\,a,b\in\C_0(\SS)\bigr\}
\]
are linearly strictly dense in $\M(\C_0(\SS))$,
\item has \emph{proper cancellation} if the sets
\[
\bigl\{(a\tens\I)\Delta_\SS(b)\,\bigl|\bigr.\,a,b\in\C_0(\SS)\bigr\}
\quad\text{and}\quad
\bigl\{\Delta_\SS(a)(\I\tens{b})\,\bigl|\bigr.\,a,b\in\C_0(\SS)\bigr\}
\]
are contained and linearly norm-dense in $\C_0(\SS)\tens\C_0(\SS)$.
\end{enumerate}
\end{definition}

The definition of weak cancellation laws was first given by G.J.~Murphy and L.~Tuset in \cite[Section 2]{Murphy-Tuset} in the context of \emph{compact} quantum semigroups. They also proved that for compact quantum semigroups these cancellation laws were equivalent to proper cancellation laws. Note also that for general locally compact quantum semigroups proper cancellation obviously implies weak cancellation.

Let us comment that quantum semigroups with proper cancellation were referred to ``bisimplifiable Hopf \cst-algebras'' in \cite{Baaj-Skandalis} and were called ``proper \cst-bialgebras with cancellation property'' in \cite{Masuda-Nakagami-Woronowicz}. This class includes quantum semigroups given by $(\C_0(\GG),\Delta_\GG)$ for all locally compact quantum groups as well as those defined by $(\C_0^{\text{\tiny{\rm{u}}}}(\GG),\Delta^{\text{\tiny{\rm{u}}}}_\GG)$ i.e.~the universal version of $\GG$ (see \cite{Kustermans}, or \cite{mmu} for a version without assumption of existence of Haar measures). Clearly there are many more examples. Let us note here that it is the weak cancellation property defined above that is equivalent to cancellation laws for classical semigroups:

\begin{proposition}\label{classC}
Let $S$ be a locally compact semigroup. Then $S$ has cancellation if and only if for any non-zero $\mu\in\C_0(S)^*$ the sets
\begin{equation}\label{classCancel}
\bigl\{g\star(\mu\cdot{f})\,\bigl|\bigr.\,f,g\in\C_0(S)\bigr\}
\quad\text{and}\quad
\bigl\{(g\cdot\mu)\star{f}\,\bigl|\bigr.\,f,g\in\C_0(S)\bigr\}
\end{equation}
are linearly strictly dense in $\M(\C_0(S))=\C_{\text{\tiny{\rm{b}}}}(S)$.
\end{proposition}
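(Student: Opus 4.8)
The plan is to unwind both families explicitly using the classical comultiplication $\Delta_S f(s,t)=f(st)$ and then to exploit the standard fact that the topological dual of $\bigl(\M(\C_0(S)),\text{strict}\bigr)=\bigl(\C_b(S),\text{strict}\bigr)$ is exactly $M(S):=\C_0(S)^*$; thus a linear subspace $V\subseteq\C_b(S)$ is strictly dense if and only if the only $\rho\in M(S)$ annihilating $V$ is $\rho=0$. A direct computation gives $\bigl(g\star(\mu\cdot f)\bigr)(t)=\int g(st)\,f(s)\,d\mu(s)$ and $\bigl((g\cdot\mu)\star f\bigr)(s)=\int f(st)\,g(t)\,d\mu(t)$. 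Writing $R_t(s)=st$ and $L_s(t)=st$ for the right and left translations, these two families correspond respectively to right and left cancellation, so it suffices to treat one of them, say the second, the other being symmetric.

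For the implication that the density conditions force cancellation I would specialize to the point mass $\mu=\delta_{t_0}$, a nonzero functional. For this choice the linear span of the second family equals $\operatorname{span}\{f\comp R_{t_0}:f\in\C_0(S)\}$, since the extra factor $g(t_0)$ is a scalar that can be made nonzero. If right cancellation failed there would be $s_1\neq s_2$ with $s_1t_0=s_2t_0$, and then $\rho=\delta_{s_1}-\delta_{s_2}$ would be a nonzero element of $M(S)$ annihilating the whole family, contradicting strict density. Hence $R_{t_0}$ is injective for every $t_0$, and the symmetric argument with $\mu=\delta_{s_0}$ applied to the first family yields left cancellation.

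For the converse I fix a nonzero $\mu$ and a functional $\rho\in M(S)$ annihilating the second family. Applying Fubini to the finite measure $\mu\tens\rho$ and the bounded continuous integrand $(s,t)\mapsto f(st)$, the annihilation condition reads $\int g(t)\,\Phi_f(t)\,d\mu(t)=0$ for all $f,g\in\C_0(S)$, where $\Phi_f:=f\star\rho\in\M(\C_0(S))=\C_b(S)$ is given by $\Phi_f(t)=\int f(st)\,d\rho(s)$. Letting $g$ range over $\C_0(S)$ forces $\Phi_f\cdot\mu=0$, so $\Phi_f$ vanishes on $\operatorname{supp}\mu$ by continuity; choosing any $t_0\in\operatorname{supp}\mu$ (nonempty since $\mu\neq0$) gives $(R_{t_0})_*\rho=0$. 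It remains to deduce $\rho=0$ from the injectivity of $R_{t_0}$ supplied by right cancellation.

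The main obstacle is precisely this last step: one must show that a continuous injective self-map $\phi$ of the locally compact Hausdorff space $S$ induces an injective pushforward $\phi_*$ on $M(S)$, even though $\phi$ need be neither proper nor a homeomorphism onto its image. I expect to settle it by regularity of $\rho$: assuming $\rho\neq0$, choose a compact $K$ with $|\rho|(S\setminus K)<\varepsilon$; then $\phi|_K$ is a homeomorphism onto the compact set $\phi(K)$, so $\sigma:=(\phi|_K)_*(\rho|_K)$ is nonzero with $\|\sigma\|\geq|\rho|(S)-\varepsilon$. Picking $f\in\C_0(S)$ with $\|f\|\leq1$ and $\int f\,d\sigma$ close to $\|\sigma\|$, and bounding the tail $\bigl|\int_{S\setminus K}(f\comp\phi)\,d\rho\bigr|\leq\varepsilon$, contradicts $\int_S (f\comp\phi)\,d\rho=0$ once $\varepsilon$ is small. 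This forces $\rho=0$, hence strict density of the second family; the first family is handled identically with $L_s$ in place of $R_t$, completing the equivalence.
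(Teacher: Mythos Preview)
Your argument is correct. The contrapositive direction (density fails if cancellation fails) is essentially identical to the paper's: both specialise $\mu$ to a point mass and observe that the resulting family consists of functions constant along the fibre of a non-injective translation, so a difference of two Dirac masses annihilates it.

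The forward implication, however, is handled quite differently. The paper works one level up: it notes that left/right cancellation is exactly injectivity of the canonical maps $(x,y)\mapsto(x,xy)$ and $(x,y)\mapsto(xy,y)$, invokes \cite[Theorem~1.1]{DKSS} to conclude that the associated morphisms $\Phi_{\text{\tiny{\rm L}}},\Phi_{\text{\tiny{\rm R}}}\in\Mor(\C_0(S)\tens\C_0(S),\C_0(S)\tens\C_0(S))$ have strictly dense range, identifies those ranges with the spans of $(f\tens\I)\Delta_S(g)$ and $\Delta_S(f)(\I\tens g)$, and then slices with $\mu\tens\id$ or $\id\tens\mu$. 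Your route is more elementary and self-contained: you use that the strict dual of $\C_{\text{\tiny b}}(S)$ is $M(S)$, reduce via Fubini to $(R_{t_0})_*\rho=0$ for some $t_0\in\operatorname{supp}\mu$, and then prove by a regularity/compact-exhaustion argument that a continuous injection pushes forward measures injectively. This avoids the external citation at the cost of a small measure-theoretic lemma; the paper's approach is shorter but relies on a general structural fact about morphisms of \cst-algebras. One cosmetic slip: your sentence ``these two families correspond respectively to right and left cancellation'' has the labels swapped---the first family detects left cancellation and the second right cancellation, as your own subsequent computations with $R_{t_0}$ in the second family confirm.
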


\begin{proof}
Cancellation properties for $S$ (from left and right) mean that the \emph{canonical maps}
\begin{equation}\label{classCanonical}
S\times{S}\ni(x,y)\longmapsto(x,xy)\in{S}\times{S}\quad\text{and}\quad
S\times{S}\ni(x,y)\longmapsto(xy,y)\in{S}\times{S}
\end{equation}
are injective, so the images of the associated morphisms $\Phi_{\text{\tiny{\rm{L}}}},\Phi_{\text{\tiny{\rm{R}}}}\in\Mor(\C_0(S)\tens\C_0(S),\C_0(S)\tens\C_0(S))$ are strictly dense (cf.~\cite[Theorem 1.1]{DKSS}) in $\M(\C_0(S)\tens\C_0(S))$. Since these images are precisely the closed linear spans of
\[
\bigl\{(f\tens\I)\Delta_S(g)\,\bigl|\bigr.\,f,g\in\C_0(S)\bigr\}
\quad\text{and}\quad
\bigl\{\Delta_S(f)(\I\tens{g})\,\bigl|\bigr.\,f,g\in\C_0(S)\bigr\}
\]
and for any $\mu\in\C_0(S)^*$ we have
\[
\bigl(g\star(\mu\cdot{f})\bigr)=(\mu\tens\id)\bigl((f\tens\I)\Delta_S(g)\bigr)
\quad\text{and}\quad
\bigl((g\cdot\mu)\star{f}\bigr)=(\id\tens\mu)\bigl(\Delta_S(f)(\I\tens{g})\bigr).
\]
Thus we see that if $S$ has cancellation and $\mu\neq{0}$ then the sets \eqref{classCancel} are linearly strictly dense in $\M(\C_0(S))$.

Conversely, assume that $S$ does not have cancellation (e.g.~from the right) so that there exists $x,x',y\in{S}$ such that $x\neq{x'}$ and $xy=x'y$. Taking $\mu$ equal to the evaluation functional at $y$ we have
\[
\bigl((g\cdot\mu)\star{f}\bigr)=f(\text{\large\textvisiblespace}\,y)g(y)
\]
so the strict closure of $\bigl\{(g\cdot\mu)\star{f}\,\bigl|\bigr.\,f,g\in\C_0(S)\bigr\}$ is contained in the subspace of bounded continuous functions which attain the same value at $x$ and $x'$.
\end{proof}

The property of having proper cancellation is certainly stronger then weak cancellation, since for a classical semigroup $S$ it implies that the canonical maps \eqref{classCanonical}
\[
S\times{S}\ni(x,y)\longmapsto(xy,y)\in{S}\times{S}\quad\text{and}\quad
S\times{S}\ni(x,y)\longmapsto(x,xy)\in{S}\times{S}
\]
are proper which need not be the case for general semigroups with cancellation. Indeed, a counterexample is provided by the locally compact semigroup $\left]0,+\infty\right[$ with addition which has cancellation, but whose canonical maps are not proper.

The main result of this paper is that an idempotent state on a quantum semigroup $\SS$ with weak cancellation which has compact support (cf.~Section \ref{cstSect}) is in a canonical way the Haar state on a rather primitive sub-object of $\SS$ which carries the structure of a \emph{proto-compact quantum hypergroup}:

\begin{definition}\label{Defproto}
A \emph{proto-compact quantum hypergroup} is described by a triple $(\sX,\Delta_\sX,\bh_\sX)$ consisting of an operator system $\sX$, a coassociative completely positive unital map $\Delta_\sX:\sX\to\sX\tens\sX$ and a faithful state $\bh_\sX$ on $\sX$ which satisfies
\[
(\bh_\sX\tens\id)\Delta_\sX(x)=\bh_\sX(x)\I=(\id\tens\bh_\sX)\Delta_\sX(x),\qqquad{x}\in\sX.
\]
We refer to $\bh_\sX$ as the \emph{Haar state}.\footnote{Uniqueness of of the Haar state is clear if $\bh_\sX'$ satisfies analogous conditions to those for $\bh_\sX$ then $\bh_\sX=\bh_\sX\star\bh_\sX'=\bh_\sX'$.}
\end{definition}

Definition \ref{Defproto} is motivated by the definition of a compact quantum hypergroup proposed by Chapovsky and Vainerman in \cite{Chapovsky-Vainerman}. The sophisticated structure of compact quantum hypergroups (see \cite[Section 4]{Chapovsky-Vainerman}) emphasized in that paper does not seem to be relevant to questions pertaining to idempotent states (cf.~\cite[Remarks after Proposition 1.10]{Franz-Skalski2009}).

In somewhat imprecise terms we can state that any idempotent state on a quantum semigroup $\SS$ with weak cancellation which has compact support is the Haar state on a ``sub-proto-compact quantum hypergroup''. One needs to stress the fact that representation of a given compactly supported idempotent state $\omega$ on a quantum semigroup with weak cancellation $\SS$ is not unique, since it is particularly easy to represent $\omega$ e.g.~as the Haar state on a trivial proto-compact quantum hypergroup (see Section \ref{kik}). This point was already addressed in \cite[Section 1]{Franz-Skalski2009} and following the ideas of \cite{Franz-Skalski2009} we also construct an appropriately universal representation in Section \ref{kik}.

In Section \ref{HaarType} we characterize the situation when the given idempotent state is of \emph{Haar type}, i.e.~it is the Haar measure on some compact quantum subgroup of $\SS$.

\section{Preliminaries}\label{cstSect}

Throughout the paper we will be using the language of \cst-algebras and von Neumann algebras. We refer e.g.~to \cite{Pedersen,Stratila-Zsido}. Additionally we will need some theory of operator systems, completely positive and completely bounded maps which can be found e.g.~in \cite{Paulsen}. Let us also state that we will be using exclusively the minimal tensor product of \cst-algebras and operator systems.

Much attention will be paid to multiplier algebras, strict topology and morphisms of \cst-algebras as defined in \cite[Section 0]{unbo} (cf.~also \cite{Lance}). More precisely if $\sA$ and $\sB$ are \cst-algebras then the set of \emph{morphisms} from $\sA$ to $\sB$ denoted $\Mor(\sA,\sB)$ is the set of $*$-homomorphisms $\Phi:\sA\to\M(\sB)$ which are non-degenerate, i.e.~the strict closure of the range of $\Phi$ contains $\I\in\M(\sB)$. All such maps extend canonically to $\M(\sA)$ and it is well known that the extensions are strictly continuous on bounded sets (\cite[Proposition 2.5]{Lance}). However, as explained in \cite[Section 2]{Salmi_strict} a linear map from a \cst-algebra to a locally convex space is strictly continuous if and only if it is strictly continuous on bounded sets (by \cite[Corollary 2.7]{Taylor}). Similarly any bounded linear functional $\ph$ on a \cst-algebra $\sA$ extends to a strictly continuous functional on $\M(\sA)$ (also denoted by $\ph$) and slice maps $(\id\tens\ph)$ and $(\ph\tens\id)$ also extend to strictly continuous maps $\M(\sA\tens\sA)\to\M(\sA)$.

The extensions of various kinds of maps from a \cst-algebra $\sA$ to its multiplier algebra have also a very useful picture drawn in terms of the second dual of $\sA$. More precisely let $\sA$ be a \cst-algebra. It is well known that $\sA^{**}$ is a von Neumann algebra isomorphic to the strong closure of the image of $\sA$ in a universal representation (i.e.~any faithful representation in which every continuous functional on $\sA$ is normal, cf.~\cite[Section 3.7]{Pedersen}). The canonical embedding $\sA\hookrightarrow{\sA^{**}}$ is then a non-degenerate $*$-homomorphism and one can identify $\M(\sA)$ with a subset of $\sA^{**}$ as follows
\[
\M(\sA)=\bigl\{x\in{\sA^{**}}\,\bigl|\bigr.\,x\sA,\sA{x}\subset\sA\bigr\}.
\]
Now elements of $\sA^*$ are normal functionals on $\sA^{**}$ and as such can be readily applied to $\M(\sA)$. The universal property of $\sA^{**}$ is that any $*$-homomorphism from $\sA$ to a von Neumann algebra has the \emph{normal extension} to $\sA^{**}$ (see \cite[Proposition 3.7.7]{Pedersen}). Slice maps also have a good description in terms of $\sA^{**}$. Indeed, the tensor product of the universal representations of $\sA$ is a faithful representation of $\sA\tens\sA$ on a Hilbert space whose strong closure is $\sA^{**}\operatorname{\bar{\tens}}\sA^{**}$. It follows that $\M(\sA\tens\sA)$ can be canonically identified with a subset of $\sA^{**}\operatorname{\bar{\tens}}\sA^{**}$ (the image of the extension of the representation of $\sA\tens\sA$ under consideration to the multiplier algebra).

The above remarks allow us to use a number of formulas involving elements of $\sA,\M(\sA),\sA^{**}$ as well as $\sA\tens\sA,\M(\sA\tens\sA)$ and continuous linear functionals on $\sA$. For example we have
\[
x\bigl((\ph\tens\id)Y\bigr)z=(\ph\tens\id)\bigl((\I\tens{x})Y(\I\tens{z})\bigr),\qqquad{Y}\in\M(\sA\tens\sA),\:{x,z}\in\sA^{**},\:\ph\in\sA^*.
\]
In particular if $\psi\in\sA^*$ then
\[
\psi\bigl(x\bigl((\ph\tens\id)Y\bigr)z)=\psi\Bigl((\ph\tens\id)\bigl((\I\tens{x})Y(\I\tens{z})\bigr)\bigr)\Bigr).
\]

Let us now recall the notion of a compact projection in the second dual of a \cst-algebra (\cite{Akemann} see also \cite{Akemann-Pedersen-Tomiyama,Blecher-Neal}). So let $\sA$ be a \cst-algebra. A projection $q\in\sA^{**}$ is called \emph{open} if $q$ is a strong limit of elements of $\sA\subset\sA^{**}$. A complement $\I-q$ of an open projection is by definition a \emph{closed} projection. Finally, a closed projection $p\in\sA^{**}$ is \emph{compact} if there exists an element $x\in\sA_+$ such that $xp=p$. In this case one can take $x$ to be of norm $1$.

\section{Idempotent states}\label{IdemSt}

Let $\SS$ be a quantum semigroup. As already mentioned in the Introduction the space $\C_0(\SS)^*$ is naturally a Banach algebra under the \emph{convolution product}:
\[
(\mu,\nu)\longmapsto\mu\star\nu=(\mu\tens\nu)\comp\Delta_\SS,\qqquad\mu,\nu\in\C_0(\SS)^*
\]
and a state $\omega$ on $\C_0(\SS)$ is called \emph{idempotent}, if it satisfies $\omega\star\omega=\omega$.

Note that if $\SS$ does not have proper cancellation (at least from the right) then $\mu\star{a}$ does not necessarily belong to $\C_0(\SS)$, but only to $\M(\C_0(\SS))$. Still the mapping
\[
\C_0(\SS)\ni{a}\longmapsto\mu\star{a}\in\M(\C_0(\SS))
\]
is strictly continuous and extends to a strictly continuous map $\M(\C_0(\SS))\to\M(\C_0(\SS))$. If $\mu$ is positive then it is completely positive and if $\mu$ is a state then it is unital. Finally if $\omega$ is idempotent then the map $a\mapsto\omega\star{a}$ is idempotent. This is part of the proof of the following proposition which extends to quantum semigroups with weak cancellation \cite[Lemma 2.5]{Salmi-Skalski} earlier proved for compact quantum groups in \cite{Franz-Skalski_new}. Our conventions are slightly different and the proof of \cite[Lemma 3.1]{Franz-Skalski_new} is only available in the extended electronic version of that paper \cite{Franz-Skalski_new_e}. We have therefore decided to include most of the steps in our slightly more general version.

\begin{proposition}\label{MultDom}
Let $\SS$ be a quantum semigroup with weak cancellation and let $\omega\in\C_0(\SS)^*$ be an idempotent state. Then the ranges of the maps $\M(\C_0(\SS))\ni{a}\mapsto\omega\star{a}\in\M(\C_0(\SS))$ and $\M(\C_0(\SS))\ni{a}\mapsto{a}\star\omega\in\M(\C_0(\SS))$ are contained in the multiplicative domain of $\omega$.
\end{proposition}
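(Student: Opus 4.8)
The plan is to verify Choi's criterion for the multiplicative domain of the state $\omega$: an element $x\in\M(\C_0(\SS))$ lies in the multiplicative domain of $\omega$ precisely when $\omega(x^*x)=|\omega(x)|^2=\omega(xx^*)$. I will treat the map $a\mapsto\omega\star a$, the map $a\mapsto a\star\omega$ being entirely symmetric. From the discussion preceding the proposition this map is unital, completely positive, strictly continuous and idempotent, with $\omega\comp(\omega\star\cdot)=\omega$ because $\omega\star\omega=\omega$; moreover $(\omega\star a)^*=\omega\star a^*$ since $\omega$ is selfadjoint. Writing $x=\omega\star a$ we have $\omega(x)=\omega(a)$, so everything reduces to the single identity
\[
\omega\bigl((\omega\star a)\,b\bigr)=\omega(a)\,\omega(b),\qqquad a,b\in\C_0(\SS).
\]
Indeed, applying it with $b=x^*=\omega\star a^*$ gives $\omega(xx^*)=|\omega(a)|^2=|\omega(x)|^2$, and applying it with $a$ replaced by $a^*$ and $b=x$ gives $\omega(x^*x)=|\omega(a)|^2$; Choi's criterion then places $x$ in the multiplicative domain. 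One inequality is automatic from the Cauchy--Schwarz inequality for the state $\omega$, namely $\omega(x^*x)\ge|\omega(x)|^2$, so the real content is to force equality.

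To prove the displayed identity I would first record the structural facts that follow from coassociativity and idempotency alone: $\Delta_\SS\comp(\omega\star\cdot)=(\id\tens(\omega\star\cdot))\comp\Delta_\SS$ and, after slicing the first leg, the reformulation $\omega\bigl((\omega\star a)\,b\bigr)=\bigl((b\cdot\omega)\star\omega\bigr)(a)$, so that the identity is equivalent to $(b\cdot\omega)\star\omega=\omega(b)\,\omega$ for every $b$. The difficulty is that purely coassociative manipulations do not close the argument: they only reproduce fixed-point relations, for instance the functional $a\mapsto\omega\bigl((\omega\star a)\,b\bigr)$ is left unchanged when $a$ is replaced by $\omega\star a$, rather than collapsing to a product of scalars. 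This is exactly the place where faithfulness of the Haar state is used in the compact quantum group case of \cite{Franz-Skalski_new}, and where we must instead appeal to weak cancellation.

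The key observation is that both sides of the identity, viewed as functionals of $b$, are strictly continuous, so it suffices to verify it for $b$ ranging over a strictly dense set. Taking $\mu=\omega$ in the definition of weak cancellation, the set $\bigl\{(b'\cdot\omega)\star a'\,\bigl|\bigr.\,a',b'\in\C_0(\SS)\bigr\}$ is linearly strictly dense in $\M(\C_0(\SS))$, and for $b$ of this special convolution form I expect the expansion of $\Delta_\SS(\omega\star a)$ via the first structural identity, together with coassociativity and $\omega\star\omega=\omega$, to close up and yield the identity. Extending by strict continuity to all $b$, and then, using strict continuity of $\omega\star\cdot$ once more, to all $a\in\M(\C_0(\SS))$, would complete the proof. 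I expect the main obstacle to be precisely this passage from inequality to equality: because $\omega$ need not be faithful, a positive element annihilated by $\omega$ need not vanish, so positivity alone cannot upgrade the Cauchy--Schwarz inequality; weak cancellation is the device that by-passes this, and the delicate technical points are the verification of the identity on the dense cancellation set and the control of the strict limits (boundedness of the approximating nets and strict continuity of the relevant slice maps), which I would handle through the $\C_0(\SS)^{**}$ picture of Section \ref{cstSect}.
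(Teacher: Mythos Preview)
Your reduction to Choi's criterion and to the identity $(b\cdot\omega)\star\omega=\omega(b)\,\omega$ is fine, but the proposed verification of this identity does not go through. You plan to test the parameter $b$ over the weak-cancellation dense set $\{(b'\cdot\omega)\star a'\}$ and then ``close up'' using coassociativity and $\omega\star\omega=\omega$. If you actually carry out that computation (say in Sweedler/leg notation), the left-hand side becomes
\[
\bigl((b\cdot\omega)\star\omega\bigr)(a)=\sum\omega\bigl(a'\,b'_{(1)}\bigr)\,\omega\bigl(a_{(1)}\,b'_{(2)}\bigr)\,\omega\bigl(a_{(2)}\bigr),
\]
while the right-hand side is $\sum\omega(a'\,b'_{(1)})\,\omega(b'_{(2)})\,\omega(a)$; equality would force $\omega(a_{(1)}b'_{(2)})$ to factor as $\omega(a_{(1)})\omega(b'_{(2)})$, which is exactly what is in question. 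No amount of coassociativity or $\omega\star\omega=\omega$ collapses that cross term. In other words, restricting the \emph{parameter} $b$ to the cancellation set does not help; coassociative manipulations on this set only reproduce the fixed-point relations you already noted.

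The missing idea is a positivity (``squaring'') step before weak cancellation is invoked. One first shows, for $y=b\star\omega$, that
\[
(\omega\tens\omega)\Bigl(\bigl(\Delta_\SS(y)-\I\tens y\bigr)^*\bigl(\Delta_\SS(y)-\I\tens y\bigr)\Bigr)=0,
\]
which is a direct calculation using only $\omega\star\omega=\omega$. Choi's Cauchy--Schwarz inequality for the state $\omega\tens\omega$ then gives, for all $a,c$,
\[
\bigl(\omega\star(\omega\cdot c)\bigr)\bigl((\omega\cdot a)\star b\bigr)=\omega(c)\,\omega\bigl((\omega\cdot a)\star b\bigr).
\]
Now weak cancellation is applied to the \emph{argument} of these functionals (the set $\{(\omega\cdot a)\star b\}$ is strictly dense), yielding $(\omega\cdot c)\star\omega=\omega(c)\,\omega$ and its mirror. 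From these relations one computes $\omega\star\bigl(x(\omega\star a)\bigr)=(\omega\star x)(\omega\star a)$ and hence $\omega\bigl((\omega\star a)^*(\omega\star a)\bigr)=|\omega(a)|^2$, which is Choi's criterion. So weak cancellation does not replace the squaring/positivity step; it is used \emph{after} it, and on the argument side rather than on the parameter $b$.
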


\begin{proof}
We will only prove that for any $a\in\M(\C_0(\SS))$ the element $\omega\star{a}$ is in the multiplicative domain of $\omega$. The fact that $a\star\omega$ is then also in the multiplicative domain of $\omega$ follows from the fact that $\omega$ is also an idempotent state on $\SS^{\text{\tiny{\rm{op}}}}$, i.e.~the quantum semigroup $\SS$ with opposite comultiplication.

Exactly as in \cite[Proof of Lemma 3.1]{Franz-Skalski_new_e} take first $b\in\C_0(\SS)$ and put $y=b\star\omega$. Then
\[
(\omega\tens\omega)\bigl(\Delta_\SS(y^*)(\I\tens{y})\bigr)=\omega\Bigl(\bigl(b^*\star(\omega\star\omega)\bigr)(b\star\omega)\Bigr)
=\omega\bigl((b^*\star\omega)(b\star\omega)\bigr)=\omega(y^*y)
\]
and by conjugation also $(\omega\tens\omega)\bigl((\I\tens{y^*})\Delta_\SS(y)\bigr)=\omega(y^*y)$. Therefore
\begin{align*}
(\omega\tens\omega)&\Bigl(\bigl(\Delta_\SS(y)-(\I\tens{y})\bigr)^*\bigl(\Delta_\SS(y)-(\I\tens{y})\bigr)\Bigr)\\
&=(\omega\tens\omega)\bigl(\Delta_\SS(y^*y)\bigr)-(\omega\tens\omega)\bigl((\I\tens{y^*})\Delta_\SS(y)\bigr)\\&\qquad-(\omega\tens\omega)\bigl(\Delta_\SS(y^*)(\I\tens{y})\bigr)+\omega(y^*y)\\
&=\omega(y^*y)-\omega(y^*y)-\omega(y^*y)+\omega(y^*y)=0.
\end{align*}
Hence, by Choi's Cauchy-Schwarz inequality (\cite[Proposition 3.3]{Paulsen}), for any $a,c\in\C_0(\SS)$
\begin{align*}
\Bigl|(\omega\tens\omega)&\Bigl((c\tens{a})\bigl(\Delta_\SS(y)-(\I\tens{y})\bigr)\Bigr)\Bigr|^2\\&\leq
(\omega\tens\omega)(cc^*\tens{aa^*})(\omega\tens\omega)\Bigl(\bigl(\Delta_\SS(y)-(\I\tens{y})\bigr)^*\bigl(\Delta_\SS(y)-(\I\tens{y})\bigr)\Bigr)=0
\end{align*}
which reads
\[
(\omega\tens\omega)\bigl((c\tens{a})\Delta_\SS(y)\bigr)=(\omega\tens\omega)(c\tens{ay}).
\]
Returning to $b$ we can rewrite this as
\[
\bigl(\omega\star(\omega\cdot{c})\bigr)\bigl((\omega\cdot{a})\star b)\bigr)=\omega(c)\omega\bigl((\omega\cdot{a})\star b\bigr),\qqquad{a,b,c}\in\C_0(\SS).
\]
Now the weak cancellation (from the right) asserts that the set $\bigl\{(\omega\cdot{a})\star b\,\bigl|\bigr.\,a,b\in\C_0(\SS)\bigr\}$ is linearly strictly dense in $\M(\C_0(\SS))$, so we find that $\omega\star(\omega\cdot{c})=\omega(c)\omega$ for all $c\in\C_0(\SS)$ and hence, by strict continuity
\begin{equation}\label{pierwszy0}
\omega\star(\omega\cdot{c})=\omega(c)\omega,\qqquad{c}\in\M(\C_0(\SS)).
\end{equation}
Similarly, using weak cancellation from the left we show that
\begin{equation}\label{pierwszy}
(\omega\cdot{c})\star\omega=\omega(c)\omega,\qqquad{c}\in\M(\C_0(\SS)).
\end{equation}

Using \eqref{pierwszy} we repeat the following elements of the proof of \cite[Theorem 2.5]{Salmi-Skalski}: take any $a,b,c\in\C_0(\SS)$. Then
\begin{align*}
(\id\tens\omega\tens\omega)\Bigl((b\tens{c}\tens\I)\bigl((\Delta_\SS\tens\id)\Delta_\SS(a)\bigr)\Bigr)&=\bigl(\id\tens(\omega\cdot{c})\tens\omega\bigr)\Bigl((b\tens\I\tens\I)\bigl((\Delta_\SS\tens\id)\Delta_\SS(a)\bigr)\Bigr)\\
&=\bigl(\id\tens(\omega\cdot{c})\tens\omega\bigr)\Bigl((b\tens\I\tens\I)\bigl((\id\tens\Delta_\SS)\Delta_\SS(a)\bigr)\Bigr)\\
&=\Bigl(\id\tens\bigl((\omega\cdot{c})\star\omega\bigr)\Bigr)\bigl((b\tens\I)\Delta_\SS(a)\bigr)\\
&=\Bigl(\id\tens\bigl(\omega(c)\omega\bigr)\Bigr)\bigl((b\tens\I)\Delta_\SS(a)\bigr)\\
&=(\id\tens\omega\tens\omega)\bigl((b\tens{c}\tens\I)\Delta_\SS(a)_{13}\bigr)
\end{align*}
Now by strict continuity we can replace $(b\tens{c})$ by any element of $\M(\C_0(\SS)\tens\C_0(\SS))$. Taking this element to be $\Delta_\SS(x)$ for some $x\in\C_0(\SS)$ we obtain
\begin{align*}
\omega\star\bigl(x(\omega\star{a})\bigr)&=(\id\tens\omega\tens\omega)\Bigl(\bigl(\Delta_\SS(x)\tens\I\bigr)\bigl((\Delta_\SS\tens\id)\Delta_\SS(a)\bigr)\Bigr)\\
&=(\id\tens\omega\tens\omega)\bigl(\Delta_\SS(x)_{12}\Delta_\SS(a)_{13}\bigr)=
(\omega\star{x})(\omega\star{a}).
\end{align*}
Using this with $x=a^*$ and with help from \eqref{pierwszy} we obtain
\begin{align*}
\omega\bigl((\omega\star{a})^*(\omega\star{a})\bigr)&=\omega\bigl((\omega\star{a^*})(\omega\star{a})\bigr)=\omega\Bigl(\omega\star\bigl(a^*(\omega\star{a})\bigr)\Bigr)=(\omega\star\omega)\bigl(a^*(\omega\star{a})\bigr)\\
&=\omega\bigl(a^*(\omega\star{a})\bigr)=(\omega\cdot{a^*})(\omega\star{a})=\bigl((\omega\cdot{a^*})\star\omega\bigr)(a)=\omega(a^*)\omega(a)\\
&=\omega(\omega\star{a^*})\omega(\omega\star{a})=\omega\bigl((\omega\star{a})^*\bigr)\omega(\omega\star{a})=\overline{\omega(\omega\star{a})}\omega(\omega\star{a})
\end{align*}
and upon substituting $a^*$ for $a$ also $\omega\bigl((\omega\star{a})(\omega\star{a})^*\bigr)=\omega(\omega\star{a})\overline{\omega(\omega\star{a})}$. By \cite[Theorem 3.18]{Paulsen} this guarantees that $\omega\star{a}$ is in the multiplicative domain of $\omega$.
\end{proof}

\section{Kawada-It\^o-Kelley theorem for quantum semigroups with weak cancellation}\label{kik}

Let $\SS$ be a quantum semigroup with weak cancellation and let $\omega\in\C_0(\SS)^*$ be an idempotent state. Clearly $\omega$ defines a normal state on the von Neumann envelope $\C_0(\SS)^{**}$ of $\C_0(\SS)$ which we will denote by the same symbol. The left kernel $N_\omega$ of this state, i.e.~the set
\[
\bigl\{x\in\C_0(\SS)^{**}\,\bigl|\bigr.\,\omega(x^*x)=0\bigr\}
\]
is a weak${}^*$-closed left ideal and hence there exists a unique projection $p_\omega\in\C_0(\SS)^{**}$ such that $N_\omega=\C_0(\SS)^{**}p_\omega$. This projection is the weak${}^*$-limit of any left approximate unit for the left ideal
\[
C_\omega=\bigl\{x\in\C_0(\SS)\,\bigl|\bigr.\,\omega(x^*x)=0\bigr\}
\]
of $\C_0(\SS)$ (cf.~\cite[Section 3.20]{Stratila-Zsido}) and hence it is open. We will denote the orthogonal complement $\I-p_\omega$ of $p_\omega$ by the symbol $p_\omega^\perp$. Clearly $p_\omega^\perp$ is a closed projection. Note also that we have
\[
C_\omega=N_\omega\cap\C_0(\SS)=\C_0(\SS)^{**}p_\omega\cap\C_0(\SS)
=\bigl\{a\in\C_0(\SS)\,\bigl|\bigr.\,ap_\omega^\perp=0\bigr\}.
\]
We will refer to $p_\omega^\perp$ as the \emph{support} of $\omega$. Note that in particular we have $p_\omega^\perp\cdot\omega=\omega\cdot{p_\omega^\perp}=\omega$.%\footnote{For $b\in\C_0(\SS)^{**}$ we have $p=p_\omega{b}+p_\omega^\perp{b}$ so $\omega(b)=\omega(p_\omega{b})+\omega(p_\omega^\perp{b})$, but $\bigl|\omega(p_\omega{b})\bigr|^2\leq\omega(p_\omega)\omega(b^*b)=0$, so $\omega(b)=\omega(p_\omega^\perp{b})$; since this is true for all $b$, we have $\omega(b)=\overline{\omega(b^*)}=\overline{\omega(p_\omega^\perp{b^*})}=\omega(bp_\omega^\perp)$ also for all $b$.}

Our standing assumption is that the support of $\omega$ is compact. We do not know whether this assumption is at all restrictive because we do not know a single example of an idempotent state whose support is not compact. We can prove that the support of an idempotent state is compact in many cases including the case when the quantum semigroup under consideration has proper cancellation (for example it is a locally compact quantum group). This is a corollary of the following proposition:

\begin{proposition}\label{ecpt}
Let $\SS$ be a quantum semigroup with weak cancellation and let $\omega\in\C_0(\SS)^*$ be an idempotent state. Assume that there is a positive element $e\in\C_0(\SS)$ such that $\omega(e)=1$ and $\omega\star{e}\in\C_0(\SS)$. Then $p_\omega^\perp$ is compact.
\end{proposition}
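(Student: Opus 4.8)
The plan is to exhibit the positive element $h:=\omega\star{e}\in\C_0(\SS)$ as the ``local unit'' witnessing compactness of $p_\omega^\perp$; that is, I would show that $h p_\omega^\perp=p_\omega^\perp$, which is precisely the defining property of a compact projection recalled in Section \ref{cstSect}. The element $h$ is positive because $a\mapsto\omega\star{a}$ is completely positive and $e\geq0$, and it lies in $\C_0(\SS)$ by hypothesis; crucially, by Proposition \ref{MultDom} it belongs to the multiplicative domain of $\omega$.

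First I would record two elementary facts. Applying $\omega$ and using idempotency, $\omega(h)=(\omega\tens\omega)\Delta_\SS(e)=(\omega\star\omega)(e)=\omega(e)=1$; since $h=h^*$ lies in the multiplicative domain of $\omega$, this forces $\omega(h^2)=\omega(h)^2=1$. Second, from $p_\omega^\perp\cdot\omega=\omega\cdot{p_\omega^\perp}=\omega$ one obtains, by compressing on each side in turn, the identity $\omega(p_\omega^\perp x p_\omega^\perp)=\omega(x)$ for every $x\in\C_0(\SS)^{**}$.

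The key step is then to consider $z:=(h-\I)p_\omega^\perp$ and to compute, using self-adjointness of $h$ together with the compression identity,
\[
\omega(z^*z)=\omega\bigl(p_\omega^\perp(h-\I)^2p_\omega^\perp\bigr)=\omega\bigl((h-\I)^2\bigr)=\omega(h^2)-2\omega(h)+1=1-2+1=0.
\]
Hence $z$ lies in the left kernel $N_\omega=\C_0(\SS)^{**}p_\omega$, so $zp_\omega=z$, equivalently $zp_\omega^\perp=0$. But $zp_\omega^\perp=(h-\I)p_\omega^\perp=z$, whence $z=0$, i.e.\ $hp_\omega^\perp=p_\omega^\perp$. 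Finally I would replace $h$ by $\min(h,\I)$ via continuous functional calculus: this still vanishes at $0$, so it stays in $\C_0(\SS)_+$, it has norm at most one, and it acts as the identity on the range of $p_\omega^\perp$; the resulting element confirms that $p_\omega^\perp$ is compact.

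As for the main obstacle, the entire argument hinges on the equality $\omega(h^2)=\omega(h)^2$, and the only input that delivers it is the fact that $\omega\star{e}$ sits in the multiplicative domain of $\omega$. So the real work is front-loaded into Proposition \ref{MultDom}; once that is available the present statement reduces to the short computation above, and the hypothesis $\omega\star{e}\in\C_0(\SS)$ is exactly what makes $h$ an honest element of $\C_0(\SS)$ rather than only of $\M(\C_0(\SS))$, so that it can serve as the required positive multiplier.
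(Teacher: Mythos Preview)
Your proof is correct and follows essentially the same route as the paper's: both use Proposition~\ref{MultDom} to place $h=\omega\star e$ in the multiplicative domain of $\omega$, then exploit $\omega(h)=1$ to show $\omega\bigl((h-\I)^*(h-\I)\bigr)=0$ and hence $hp_\omega^\perp=p_\omega^\perp$. The only differences are cosmetic: the paper carries an arbitrary $z\in\C_0(\SS)^{**}$ through the computation and then specializes to $z=\I$, whereas you target the element $(h-\I)p_\omega^\perp$ directly; and your final normalization step replacing $h$ by $\min(h,\I)$ is not needed, since the definition of a compact projection in Section~\ref{cstSect} only asks for some $x\in\C_0(\SS)_+$ with $xp_\omega^\perp=p_\omega^\perp$, the norm-$1$ version being a stated consequence rather than a requirement.
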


\begin{proof}
By Proposition \ref{MultDom} for each $e\in\C_0(\SS)$ the element $\omega\star{e}$ belongs to the multiplicative domain of $\omega$ and obviously $\omega(\omega\star{e})=\omega(e)$. For every $z\in\C_0(\SS)^{**}$ we have
\begin{align*}
\omega\Bigl(\bigl(z-z(\omega\star{e})\bigr)^*\bigl(z-z(\omega\star{e})\bigr)\Bigr)
&=\omega\bigl(z^*z-z^*z(\omega\star{e})-(\omega\star{e})z^*z+(\omega\star{e})z^*z(\omega\star{e})\bigr)\\
&=\omega(z^*z)\bigl(1-\omega(e)-\omega(e)+\omega(e)^2\bigr)=0.
\end{align*}
It follows that $z-z(\omega\star{e})\in{N_\omega}$, which proves that
\begin{equation}\label{allzp}
zp_\omega^\perp=z(\omega\star{e})p_\omega^\perp,\qqquad{z}\in\C_0(\SS)^{**}.
\end{equation}
Inserting $z=p_\omega^\perp$ in \eqref{allzp} gives $p_\omega^\perp(\omega\star{e})p_\omega^\perp=(\omega\star{e})p_\omega^\perp$ and $z=\I$ gives $p_\omega^\perp=(\omega\star{e})p_\omega^\perp$. Since $\omega\star{e}\in\C_0(\SS)$ we see that $p_\omega^\perp$ is a compact projection.
\end{proof}

\begin{remark}\label{Rem_proper}
Let $\SS$ be a quantum semigroup with weak cancellation and let $\omega\in\C_0(\SS)^*$ be an idempotent state.
\begin{enumerate}
\item As we noted before Proposition \ref{ecpt} if $\SS$ has proper cancellation then for any $e\in\C_0(\SS)$ we have $\omega\star{e}\in\C_0(\SS)$. In particular if $\SS=\GG$ or $\SS=\GG^{\text{\tiny{\rm{u}}}}$ for a locally compact quantum group $\GG$ then $p_\omega^\perp$ is a compact projection.
\item\label{Rem_proper2} The fact that the support of an idempotent state on a quantum semigroup with proper cancellation is always compact can be considered a simple Kawada-It\^o-Kelley type result.
\end{enumerate}
\end{remark}

\begin{proposition}\label{prop_perp_belongs}
Let $\SS$ be a quantum semigroup with weak cancellation and let $\omega\in\C_0(\SS)^*$ be an idempotent state with compact support $p_\omega^\perp$. Then the projection $p_\omega^\perp$ belongs to $p_\omega^\perp\C_0(\SS)p_\omega^\perp$ and consequently $p_\omega^\perp\C_0(\SS)p_\omega^\perp$ is an operator system in $p_\omega^\perp\C_0(\SS)^{**}p_\omega^\perp$. Conversely, if $p_\omega^\perp\in p_\omega^\perp\C_0(\SS)p_\omega^\perp$ then $p_\omega^\perp$ is a compact projection.
\end{proposition}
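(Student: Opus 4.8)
I would prove the two assertions separately, since they are of rather different character.

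For the first, I unwind the definition of compactness recalled in Section~\ref{cstSect}: it provides $x\in\C_0(\SS)_+$ with $xp_\omega^\perp=p_\omega^\perp$, hence also $p_\omega^\perp x=p_\omega^\perp$ (taking adjoints, as $x$ and $p_\omega^\perp$ are self-adjoint), and therefore $p_\omega^\perp xp_\omega^\perp=p_\omega^\perp$ with $x\in\C_0(\SS)$. This exhibits $p_\omega^\perp\in p_\omega^\perp\C_0(\SS)p_\omega^\perp$. The ``consequently'' is then purely formal: $p_\omega^\perp\C_0(\SS)^{**}p_\omega^\perp$ is a von Neumann algebra with unit $p_\omega^\perp$, and $p_\omega^\perp\C_0(\SS)p_\omega^\perp$ is a self-adjoint linear subspace of it, because $(p_\omega^\perp ap_\omega^\perp)^*=p_\omega^\perp a^*p_\omega^\perp$ and $a^*\in\C_0(\SS)$; having just shown that it contains the unit $p_\omega^\perp$, it is by definition an operator system.

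For the converse I would start from a presentation $p_\omega^\perp=p_\omega^\perp c\,p_\omega^\perp$ with $c\in\C_0(\SS)$ and pass to a positive element by a direct computation. Inserting $\I=p_\omega^\perp+(\I-p_\omega^\perp)$ gives $p_\omega^\perp c^*c\,p_\omega^\perp-p_\omega^\perp c^*p_\omega^\perp c\,p_\omega^\perp=\bigl((\I-p_\omega^\perp)c\,p_\omega^\perp\bigr)^*\bigl((\I-p_\omega^\perp)c\,p_\omega^\perp\bigr)\ge0$, and since $p_\omega^\perp c^*p_\omega^\perp=(p_\omega^\perp c\,p_\omega^\perp)^*=p_\omega^\perp$ this reads $p_\omega^\perp b\,p_\omega^\perp\ge p_\omega^\perp$ for $b:=c^*c\in\C_0(\SS)_+$. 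To extract compactness I would localise through the functional calculus of $b$: for small $\varepsilon>0$ the spectral projection $\chi_{[\varepsilon,\infty)}(b)$ is a compact projection (witnessed by $g_\varepsilon(b)\in\C_0(\SS)_+$, where $g_\varepsilon(t)=\min(t/\varepsilon,1)$), and decomposing a unit vector in the range of $p_\omega^\perp$ along $\chi_{[\varepsilon,\infty)}(b)$ and its complement---on which $b\le\varepsilon$---shows that $p_\omega^\perp\chi_{[\varepsilon,\infty)}(b)\,p_\omega^\perp\ge\delta\,p_\omega^\perp$ for some $\delta>0$.

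The main obstacle is the final step: deducing compactness of the closed projection $p_\omega^\perp$ from being non-degenerately compressed by a compact projection, equivalently from the existence of $b\in\C_0(\SS)_+$ with $p_\omega^\perp b\,p_\omega^\perp\ge p_\omega^\perp$. This cannot be achieved by algebra alone, because the relation $p_\omega^\perp c\,p_\omega^\perp=p_\omega^\perp$ controls $c$ only when sandwiched between two copies of $p_\omega^\perp$: it forces $\langle c\,\xi,\xi\rangle=\|\xi\|^2$ for $\xi$ in the range of $p_\omega^\perp$, which pins down only the mean of the spectral distribution of $\tfrac12(c+c^*)$ there and not its support, so one cannot in general replace $c$ by a positive contraction acting as the identity on that range and thereby produce the one-sided relation $xp_\omega^\perp=p_\omega^\perp$ demanded by the definition of compactness. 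The closedness of $p_\omega^\perp$ (automatic from its construction in Section~\ref{kik}) must enter precisely here, and I would discharge this step by appealing to the structure theory of closed and compact projections in a second dual from the references quoted in Section~\ref{cstSect}, namely \cite{Akemann,Akemann-Pedersen-Tomiyama,Blecher-Neal}, which characterise compactness of a closed projection through exactly this kind of domination by elements of $\C_0(\SS)$.
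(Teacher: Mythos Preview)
Your proof of the forward direction is identical to the paper's: from compactness one has $x\in\C_0(\SS)_+$ with $xp_\omega^\perp=p_\omega^\perp$, hence $p_\omega^\perp=p_\omega^\perp x p_\omega^\perp\in p_\omega^\perp\C_0(\SS)p_\omega^\perp$, and the operator system claim is immediate.

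For the converse the paper is much more economical: it invokes \cite[Theorem 7.2]{Blecher-Neal} directly on the hypothesis $p_\omega^\perp\in p_\omega^\perp\C_0(\SS)p_\omega^\perp$, which already characterises compactness of a closed projection. Your intermediate computations---deriving $p_\omega^\perp b\,p_\omega^\perp\ge p_\omega^\perp$ with $b=c^*c\in\C_0(\SS)_+$, and then the spectral-projection estimate $p_\omega^\perp\chi_{[\varepsilon,\infty)}(b)\,p_\omega^\perp\ge\delta\,p_\omega^\perp$---are correct, but they do not bring you closer to the one-sided relation $xp_\omega^\perp=p_\omega^\perp$ required by the definition; as you yourself observe, closedness must enter and you end up appealing to the same references anyway. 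So the detour through $b$ and its spectral projections buys nothing: the cited result applies straight to the original hypothesis. Your citation is also vaguer than the paper's---it would be better to point to the specific theorem (\cite[Theorem 7.2]{Blecher-Neal}) rather than to the collection \cite{Akemann,Akemann-Pedersen-Tomiyama,Blecher-Neal} as a whole.
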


\begin{proof}
Since $p_\omega^\perp=xp_\omega^\perp$ for some $x\in\C_0(\SS)$, we have $p_\omega^\perp=p_\omega^\perp{x}p_\omega^\perp\in{p_\omega^\perp\C_0(\SS)p_\omega^\perp}$. Clearly $p_\omega^\perp\C_0(\SS)p_\omega^\perp$ is a norm-closed self-adjoint subspace of the unital \cst-algebra $p_\omega^\perp\C_0(\SS)^{**}p_\omega^\perp$ containing the unit of the latter. Conversely, if $p_\omega^\perp\in p_\omega^\perp\C_0(\SS)p_\omega^\perp$ then $p_\omega^\perp$ is compact due to \cite[Theorem 7.2]{Blecher-Neal}.
\end{proof}

In what follows we will denote the operator system $p_\omega^\perp\C_0(\SS)p_\omega^\perp$ by $\sX_\omega$. Our next aim is to define a comultiplication on $\sX_\omega$. For this let us recall that $\M(\C_0(\SS)\tens\C_0(\SS))$ can be identified with a subset of $\C_0(\SS)^{**}\operatorname{\bar{\tens}}\C_0(\SS)^{**}$. Therefore we can consider the \emph{normal extension} of the $*$-homomorphism $\Delta_\SS:\C_0(\SS)\to\M(\C_0(\SS)\tens\C_0(\SS))$ to a normal unital $*$-homomorphism $\C_0(\SS)^{**}\to\C_0(\SS)^{**}\operatorname{\bar{\tens}}\C_0(\SS)^{**}$. We will denote this extension by $\tDelta$. Clearly $\bigl.\tDelta\bigr|_{\C_0(\SS)}=\Delta_\SS$ and moreover if $\omega\in\C_0(\SS)^*$ is an idempotent state then by strong density of $\C_0(\SS)$ in $\C_0(\SS)^{**}$ we also have
\[
(\omega\tens\omega)\comp\tDelta=\omega.
\]

\begin{theorem}\label{DeltaOnX}
Let $\SS$ be a quantum semigroup with weak cancellation and let $\omega\in\C_0(\SS)^*$ be an idempotent state with compact support $p_\omega^\perp$. Then the restriction $\Delta_\omega$ of the mapping
\begin{equation}\label{DeltaBig}
\C_0(\SS)^{**}\ni{y}\longmapsto(p_\omega^\perp{\tens}p_\omega^\perp)\tDelta(y)(p_\omega^\perp{\tens}p_\omega^\perp)\in\C_0(\SS)^{**}\operatorname{\bar{\tens}}\C_0(\SS)^{**}
\end{equation}
to $\sX_\omega$ has its range in $\sX_\omega\tens\sX_\omega$ and $\Delta_\omega$ is unital, completely positive and coassociative.
\end{theorem}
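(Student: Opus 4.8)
The plan is to reduce the whole statement to a single projection identity in $\C_0(\SS)^{**}\operatorname{\bar{\tens}}\C_0(\SS)^{**}$, namely
\[
\tDelta(p_\omega^\perp)(p_\omega^\perp\tens p_\omega^\perp)=p_\omega^\perp\tens p_\omega^\perp,
\]
equivalently $\tDelta(p_\omega^\perp)\geq p_\omega^\perp\tens p_\omega^\perp$, together with its adjoint $(p_\omega^\perp\tens p_\omega^\perp)\tDelta(p_\omega^\perp)=p_\omega^\perp\tens p_\omega^\perp$. Granting this, complete positivity is immediate: the map \eqref{DeltaBig} is the composition of the normal unital $*$-homomorphism $\tDelta$ with the compression $Z\mapsto(p_\omega^\perp\tens p_\omega^\perp)Z(p_\omega^\perp\tens p_\omega^\perp)$, both completely positive, and $\Delta_\omega$ is its restriction to the operator system $\sX_\omega$. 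Unitality is exactly the identity above read at the unit $p_\omega^\perp$ of $\sX_\omega$, since it gives $(p_\omega^\perp\tens p_\omega^\perp)\tDelta(p_\omega^\perp)(p_\omega^\perp\tens p_\omega^\perp)=p_\omega^\perp\tens p_\omega^\perp$.

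The crux, and where I expect the main difficulty, is the projection identity. First I would record a consequence of Proposition \ref{MultDom}: every element $m$ of the multiplicative domain of $\omega$ satisfies $m\,p_\omega^\perp=\omega(m)\,p_\omega^\perp$. Indeed, by the multiplicative-domain property $\omega(m^*m)=|\omega(m)|^2$, so $\omega\bigl((m-\omega(m)\I)^*(m-\omega(m)\I)\bigr)=\omega(m^*m)-|\omega(m)|^2=0$, whence $m-\omega(m)\I$ lies in the left kernel $N_\omega=\C_0(\SS)^{**}p_\omega$. Applying this to $m=\omega\star a$ (which lies in the multiplicative domain by Proposition \ref{MultDom}, with $\omega(\omega\star a)=\omega(a)$) and extending by normality in $a$ yields
\[
(\omega\star a)\,p_\omega^\perp=\omega(a)\,p_\omega^\perp,\qquad a\in\C_0(\SS)^{**};
\]
in particular $a=p_\omega$ gives $(\omega\star p_\omega)p_\omega^\perp=0$. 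Now consider the positive element $Y=(p_\omega^\perp\tens\I)\tDelta(p_\omega)(p_\omega^\perp\tens\I)$ and slice its second leg by $\omega$: one obtains $(\id\tens\omega)(Y)=p_\omega^\perp(\omega\star p_\omega)p_\omega^\perp=0$. For every normal state $\psi$ on $\C_0(\SS)^{**}$ the element $(\psi\tens\id)(Y)$ is positive and satisfies $\omega\bigl((\psi\tens\id)(Y)\bigr)=0$, hence (as $\omega(h)=0$ for $h\geq0$ forces $h\,p_\omega^\perp=0$) it is annihilated on the right by $p_\omega^\perp$. Since normal functionals on the first leg separate points, this forces $Y(\I\tens p_\omega^\perp)=0$, i.e. $(p_\omega^\perp\tens\I)\tDelta(p_\omega)(p_\omega^\perp\tens p_\omega^\perp)=0$; multiplying on the left by $\I\tens p_\omega^\perp$ gives $(p_\omega^\perp\tens p_\omega^\perp)\tDelta(p_\omega)(p_\omega^\perp\tens p_\omega^\perp)=0$. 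As $\tDelta(p_\omega)\geq0$ this means $\tDelta(p_\omega)(p_\omega^\perp\tens p_\omega^\perp)=0$, which is the desired identity.

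With the projection identity in hand, the range statement uses compactness of $p_\omega^\perp$. Writing a general element of $\sX_\omega$ as $p_\omega^\perp a\,p_\omega^\perp$ with $a\in\C_0(\SS)$ and absorbing $\tDelta(p_\omega^\perp)$ on both sides via the identity and its adjoint, one finds $\Delta_\omega(p_\omega^\perp a\,p_\omega^\perp)=(p_\omega^\perp\tens p_\omega^\perp)\tDelta(a)(p_\omega^\perp\tens p_\omega^\perp)$. Since $\tDelta(a)=\Delta_\SS(a)\in\M(\C_0(\SS)\tens\C_0(\SS))$ and $p_\omega^\perp\tens p_\omega^\perp$ is a compact projection in $\C_0(\SS)\tens\C_0(\SS)$, the standard fact $q\M(B)q=qBq$ for a compact projection $q$ places this element in $(p_\omega^\perp\tens p_\omega^\perp)\bigl(\C_0(\SS)\tens\C_0(\SS)\bigr)(p_\omega^\perp\tens p_\omega^\perp)$, whose norm closure is exactly $\sX_\omega\tens\sX_\omega$ (the minimal operator-system tensor product, which coincides with the spatial norm inside $\C_0(\SS)^{**}\operatorname{\bar{\tens}}\C_0(\SS)^{**}$).

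Finally, for coassociativity I would compute both $(\Delta_\omega\tens\id)\comp\Delta_\omega$ and $(\id\tens\Delta_\omega)\comp\Delta_\omega$ on $\sX_\omega$ by working inside $\C_0(\SS)^{**}\operatorname{\bar{\tens}}\C_0(\SS)^{**}$. Using $(\tDelta\tens\id)(p_\omega^\perp\tens p_\omega^\perp)=\tDelta(p_\omega^\perp)\tens p_\omega^\perp$ together with the projection identity, the first collapses to $(p_\omega^\perp\tens p_\omega^\perp\tens p_\omega^\perp)(\tDelta\tens\id)\tDelta(y)(p_\omega^\perp\tens p_\omega^\perp\tens p_\omega^\perp)$, and symmetrically the second collapses to $(p_\omega^\perp\tens p_\omega^\perp\tens p_\omega^\perp)(\id\tens\tDelta)\tDelta(y)(p_\omega^\perp\tens p_\omega^\perp\tens p_\omega^\perp)$; these agree by coassociativity of $\tDelta$, which is inherited from $\Delta_\SS$ by normality. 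The main obstacle is thus the projection identity $\tDelta(p_\omega^\perp)\geq p_\omega^\perp\tens p_\omega^\perp$, encoding the ``subgroup closure'' of the support; complete positivity, unitality, the range inclusion and coassociativity are then routine bookkeeping with compressions.
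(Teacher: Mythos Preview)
Your proposal is correct and follows essentially the same route as the paper: reduce everything to the projection identity $\tDelta(p_\omega^\perp)(p_\omega^\perp\tens p_\omega^\perp)=p_\omega^\perp\tens p_\omega^\perp$, then read off complete positivity, unitality, the range inclusion (via $p_\omega^\perp=xp_\omega^\perp$ with $x\in\C_0(\SS)$), and coassociativity. The only notable difference is how you reach the projection identity: you invoke Proposition~\ref{MultDom} to get $(\omega\star a)p_\omega^\perp=\omega(a)p_\omega^\perp$ and then specialize to $a=p_\omega$ by normality, whereas the paper bypasses Proposition~\ref{MultDom} entirely and simply observes $\omega\bigl((\id\tens\omega)\tDelta(p_\omega)\bigr)=(\omega\tens\omega)\tDelta(p_\omega)=\omega(p_\omega)=0$ directly from idempotence; both lead to the same slicing argument. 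One small terminological point: your ``standard fact $q\M(B)q=qBq$ for a compact projection $q$'' only uses the existence of $x\in B_+$ with $xq=q$ (which you have via $x\tens x$), not closedness of $p_\omega^\perp\tens p_\omega^\perp$ in $(\C_0(\SS)\tens\C_0(\SS))^{**}$---and indeed the paper's proof is exactly this computation, written out explicitly as $(p_\omega^\perp\tens p_\omega^\perp)\Delta_\SS(a)(p_\omega^\perp\tens p_\omega^\perp)=(p_\omega^\perp\tens p_\omega^\perp)\bigl(\Delta_\SS(a)(x\tens x)\bigr)(p_\omega^\perp\tens p_\omega^\perp)$.
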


\begin{proof}
Clearly \eqref{DeltaBig} is a completely positive map. In order to prove the properties of $\Delta_\omega$ we will first show that
\begin{equation}\label{Dp0}
\tDelta(p_\omega^\perp)(p_\omega^\perp{\tens}p_\omega^\perp)=p_\omega^\perp{\tens}p_\omega^\perp.
\end{equation}
To that end consider the positive element $(\id\tens\omega)\tDelta(p_\omega)$. We have
\[
\omega\bigl((\id\tens\omega)\tDelta(p_\omega)\bigr)=(\omega\tens\omega)\tDelta(p_\omega)=\omega(p_\omega)=0,
\]
so $p_\omega^\perp\bigr((\id\tens\omega)\tDelta(p_\omega)\bigr)p_\omega^\perp=0$. Thus for any positive $\mu\in\C_0(\SS)^*$
\[
0=\mu\Bigl(p_\omega^\perp\bigr((\id\tens\omega)\tDelta(p_\omega)\bigr)p_\omega^\perp\Bigr)=\omega\Bigl((\mu\tens\id)\bigl((p_\omega^\perp\tens\I)\tDelta(p_\omega)(p_\omega^\perp\tens\I)\bigr)\Bigr),
\]
so that
\[
(\mu\tens\id)\Bigl((p_\omega^\perp\tens{p_\omega^\perp})\tDelta(p_\omega)(p_\omega^\perp\tens{p_\omega^\perp})\Bigr)=p_\omega^\perp\Bigl((\mu\tens\id)\bigl((p_\omega^\perp\tens\I)\tDelta(p_\omega)(p_\omega^\perp\tens\I)\bigr)\Bigr)p_\omega^\perp=0.
\]
Since this holds for all positive $\mu$, we obtain $(p_\omega^\perp\tens{p_\omega^\perp})\tDelta(p_\omega)(p_\omega^\perp\tens{p_\omega^\perp})=0$ or in other words
\[
(p_\omega^\perp\tens{p_\omega^\perp})\tDelta(p_\omega)\tDelta(p_\omega)(p_\omega^\perp\tens{p_\omega^\perp})=0.
\]
Thus $\tDelta(p_\omega)(p_\omega^\perp\tens{p_\omega^\perp})=0$ and we arrive at \eqref{Dp0}.

Take now any element $y=p_\omega^\perp{a}p_\omega^\perp$ of $\sX_\omega=p_\omega^\perp\C_0(\SS)p_\omega^\perp$ and let $x\in\C_0(\SS)$ be such that $p_\omega^\perp=xp_\omega^\perp$. Then using \eqref{Dp0} we obtain
\begin{align*}
(p_\omega^\perp{\tens}p_\omega^\perp)\tDelta(y)(p_\omega^\perp{\tens}p_\omega^\perp)&=(p_\omega^\perp{\tens}p_\omega^\perp)\tDelta(p_\omega^\perp{a}p_\omega^\perp)(p_\omega^\perp{\tens}p_\omega^\perp)\\
&=(p_\omega^\perp{\tens}p_\omega^\perp)\tDelta(p_\omega^\perp)\tDelta(a)\tDelta(p_\omega^\perp)(p_\omega^\perp{\tens}p_\omega^\perp)\\
&=(p_\omega^\perp{\tens}p_\omega^\perp)\tDelta(p_\omega^\perp)\Delta_\SS(a)\tDelta(p_\omega^\perp)(p_\omega^\perp{\tens}p_\omega^\perp)\\
&=(p_\omega^\perp{\tens}p_\omega^\perp)\Delta_\SS(a)(p_\omega^\perp{\tens}p_\omega^\perp)\\
&=(p_\omega^\perp{\tens}p_\omega^\perp)\Delta_\SS(a)(xp_\omega^\perp{\tens}xp_\omega^\perp)\\
&=(p_\omega^\perp{\tens}p_\omega^\perp)\bigl(\Delta_\SS(a)(x\tens{x})\bigr)(p_\omega^\perp{\tens}p_\omega^\perp)
\end{align*}
and the last element belongs to $\sX_\omega\tens\sX_\omega$. It follows that $\Delta_\omega(\sX_\omega)\subset\sX_\omega\tens\sX_\omega$ and that it is completely positive and unital.

The coassociativity of $\Delta_\omega$ follows now again from \eqref{Dp0}: for $a\in\C_0(\SS)$ we have
\begin{align*}
(\Delta_\omega&\tens\id)\Delta_\omega(p_\omega^\perp{a}p_\omega^\perp)\\
&=(p_\omega^\perp{\tens}p_\omega^\perp\tens\I)\Bigl(
(\tDelta\tens\id)\bigl((p_\omega^\perp{\tens}p_\omega^\perp)\tDelta(p_\omega^\perp{a}p_\omega^\perp)(p_\omega^\perp{\tens}p_\omega^\perp)\bigr)
\Bigr)(p_\omega^\perp{\tens}p_\omega^\perp\tens\I)\\
&=(p_\omega^\perp{\tens}p_\omega^\perp\tens\I)\Bigl(
(\tDelta\tens\id)\bigl((p_\omega^\perp{\tens}p_\omega^\perp)\tDelta(p_\omega^\perp)\Delta_\SS(a)\tDelta(p_\omega^\perp)(p_\omega^\perp{\tens}p_\omega^\perp)\bigr)
\Bigr)(p_\omega^\perp{\tens}p_\omega^\perp\tens\I)\\
&=(p_\omega^\perp{\tens}p_\omega^\perp{\tens}p_\omega^\perp)\bigl((\tDelta\tens\id)\Delta_\SS(a)\bigr)(p_\omega^\perp{\tens}p_\omega^\perp{\tens}p_\omega^\perp)\\
&=(p_\omega^\perp{\tens}p_\omega^\perp{\tens}p_\omega^\perp)\bigl((\Delta_\SS\tens\id)\Delta_\SS(a)\bigr)(p_\omega^\perp{\tens}p_\omega^\perp{\tens}p_\omega^\perp)\\
&=(p_\omega^\perp{\tens}p_\omega^\perp{\tens}p_\omega^\perp)\bigl((\id\tens\Delta_\SS)\Delta_\SS(a)\bigr)(p_\omega^\perp{\tens}p_\omega^\perp{\tens}p_\omega^\perp)
=(\id\tens\Delta_\omega)\Delta_\omega(p_\omega^\perp{a}p_\omega^\perp)
\end{align*}
where the last equality is obtained by reversing all previous steps.
\end{proof}

\begin{remark}\label{innywzor}
Let $\SS$ be a quantum semigroup with weak cancellation and let $\omega\in\C_0(\SS)^*$ be an idempotent state with compact support $p_\omega^\perp$. In Theorem \ref{DeltaOnX} we have shown that there is a comultiplication $\Delta_\omega:\sX_\omega\to\sX_\omega\tens\sX_\omega$. The proof also shows that there is a natural formula
\[
\Delta_\omega(p_\omega^\perp{a}p_\omega^\perp)=(p_\omega^\perp{\tens}p_\omega^\perp)\Delta_\SS(a)(p_\omega^\perp{\tens}p_\omega^\perp),\qqquad{a}\in\C_0(\SS).
\]
One can show that this formula defines a map on $\sX_\omega$ even without the assumption that $p_\omega^\perp$ be compact.
\end{remark}

\begin{theorem}\label{Xomega}
Let $\SS$ be a quantum semigroup with weak cancellation and let $\omega\in\C_0(\SS)^*$ be an idempotent state with compact support. Let $\bh_\omega$ be the restriction of $\omega$ to $\sX_\omega$. Then $(\sX_\omega,\Delta_\omega,\bh_\omega)$ is a proto-compact quantum hypergroup.
\end{theorem}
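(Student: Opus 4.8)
The structure $(\sX_\omega,\Delta_\omega)$ already supplies everything in Definition \ref{Defproto} except the properties of the Haar state: by Proposition \ref{prop_perp_belongs} the space $\sX_\omega$ is an operator system with unit $p_\omega^\perp$, and by Theorem \ref{DeltaOnX} the map $\Delta_\omega$ is a coassociative, completely positive, unital map $\sX_\omega\to\sX_\omega\tens\sX_\omega$. The plan is therefore to verify that $\bh_\omega$, the restriction of $\omega$ to $\sX_\omega$, is a faithful state and that it satisfies the invariance identities
\[
(\bh_\omega\tens\id)\Delta_\omega(x)=\bh_\omega(x)p_\omega^\perp=(\id\tens\bh_\omega)\Delta_\omega(x),\qqquad{x}\in\sX_\omega,
\]
keeping in mind that the unit of $\sX_\omega$ is $p_\omega^\perp$.

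I would first dispose of faithfulness. Since $\omega$ is a normal state on $\C_0(\SS)^{**}$ whose support projection is exactly $p_\omega^\perp$, its restriction to the corner von Neumann algebra $p_\omega^\perp\C_0(\SS)^{**}p_\omega^\perp$ is faithful. As $\sX_\omega\subset p_\omega^\perp\C_0(\SS)^{**}p_\omega^\perp$ inherits its order structure, any positive $x\in\sX_\omega$ with $\bh_\omega(x)=\omega(x)=0$ must vanish; together with $\bh_\omega(p_\omega^\perp)=\omega(p_\omega^\perp)=1$ and positivity of $\omega$ this makes $\bh_\omega$ a faithful state on the operator system $\sX_\omega$.

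For the invariance identities I would use the explicit formula of Remark \ref{innywzor}. Writing $x=p_\omega^\perp{a}p_\omega^\perp$ and slicing, the relations $p_\omega^\perp\cdot\omega=\omega\cdot{p_\omega^\perp}=\omega$ allow me to pull the compressions through the slice map and reduce the first identity to
\[
(\bh_\omega\tens\id)\Delta_\omega(x)=p_\omega^\perp(a\star\omega)p_\omega^\perp,
\]
the symmetric computation producing $p_\omega^\perp(\omega\star{a})p_\omega^\perp$ on the other side. The crux is then the following compression lemma: if $z\in\C_0(\SS)^{**}$ lies in the multiplicative domain of $\omega$, then $p_\omega^\perp{z}p_\omega^\perp=\omega(z)p_\omega^\perp$. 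I would prove this by putting $w=p_\omega^\perp{z}p_\omega^\perp-\omega(z)p_\omega^\perp\in p_\omega^\perp\C_0(\SS)^{**}p_\omega^\perp$ and showing $\omega(w^*w)=0$; expanding $w^*w$ and repeatedly invoking $\omega(p_\omega^\perp\,\cdot\,p_\omega^\perp)=\omega$ together with the factorization property $\omega(z^*y)=\omega(z^*)\omega(y)$ valid on the multiplicative domain collapses $\omega(w^*w)$ to $0$, whereupon faithfulness on the corner forces $w=0$. Applying this lemma to $z=a\star\omega$ and $z=\omega\star{a}$, which belong to the multiplicative domain of $\omega$ by Proposition \ref{MultDom}, and noting that $\omega(a\star\omega)=\omega(\omega\star{a})=(\omega\star\omega)(a)=\omega(a)=\bh_\omega(x)$, yields both invariance identities simultaneously.

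The genuinely substantive step is the compression lemma, i.e.~passing from mere membership in the multiplicative domain of $\omega$ to the pointwise identity $p_\omega^\perp{z}p_\omega^\perp=\omega(z)p_\omega^\perp$; this is where faithfulness of $\omega$ on the corner and the Choi-type factorization of the multiplicative domain are combined. Once it is established, the reduction via Remark \ref{innywzor} and Proposition \ref{MultDom} is routine slice-map bookkeeping, and uniqueness of the Haar state is already recorded in the footnote to Definition \ref{Defproto}.
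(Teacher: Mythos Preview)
Your proposal is correct and follows essentially the same route as the paper: the paper also reduces invariance via the formula of Remark \ref{innywzor} to the identity $p_\omega^\perp(a\star\omega)p_\omega^\perp=\omega(a)p_\omega^\perp$, which it obtains by expanding $\omega\bigl((\omega(a)p_\omega^\perp-(a\star\omega))^*(\omega(a)p_\omega^\perp-(a\star\omega))\bigr)$ and invoking Proposition \ref{MultDom}. Your ``compression lemma'' is exactly this computation phrased for a general element of the multiplicative domain, with the faithfulness-on-the-corner argument in place of the paper's use of $N_\omega=\C_0(\SS)^{**}p_\omega$; the two are equivalent.
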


\begin{proof}
It is clear that $\bh_\omega$ is faithful, so we only need to prove that it is invariant, i.e.
\[
(\bh_\omega\tens\id)\Delta_\omega(x)=(\id\tens\bh_\omega)\Delta_\omega(x)=\bh(x)p_\omega^\perp,\qqquad{x}\in\sX_\omega.
\]

To prove right invariance we note that first that for any $a\in\C_0(\SS)$ we have
\begin{align*}
\omega\Bigl(\bigl(\omega(a)&p_\omega^\perp-(a\star\omega)\bigr)^*\bigl(\omega(a)p_\omega^\perp-(a\star\omega)\bigr)\Bigr)\\
&=\omega\Bigl(\omega(a^*)\omega(a)p_\omega^\perp-\omega(a^*)(a\star\omega)-(a^*\star\omega)\omega(a)
+(a^*\star\omega)(a\star\omega)\Bigr)\\
&=\omega(a^*)\omega(a)-\omega(a^*)\omega(a\star\omega)-\omega(a^*\star\omega)\omega(a)+\omega\bigl((a^*\star\omega)(a\star\omega)\bigr)\\
&=\omega(a^*)\omega(a)-\omega(a^*)\omega(a)-\omega(a^*)\omega(a)+\omega(a^*)\omega(a)=0
\end{align*}
because $a\star\omega$ is in the multiplicative domain of $\omega$ (Proposition \ref{MultDom}). This means that
\begin{equation}\label{ompom0}
p_\omega^\perp\bigl(\omega(a)p_\omega^\perp-(a\star\omega)\bigr)p_\omega^\perp=0
\end{equation}
and consequently
\begin{equation}\label{ompom}
p_\omega^\perp(a\star\omega)p_\omega^\perp=\omega(a)p_\omega^\perp=\omega(p_\omega^\perp{a}p_\omega^\perp)p_\omega^\perp=\bh_\omega(p_\omega^\perp{a}p_\omega^\perp)p_\omega^\perp.
\end{equation}

Using \eqref{ompom} and Remark \ref{innywzor} we compute:
\begin{align*}
(\bh_\omega\tens\id)\bigl(\Delta_\omega(p_\omega^\perp{a}p_\omega^\perp)\bigr)&=(\bh_\omega\tens\id)\bigl((p_\omega^\perp{\tens}p_\omega^\perp)\Delta_\SS(a)(p_\omega^\perp{\tens}p_\omega^\perp)\bigr)\\
&=(\omega\tens\id)\bigl((p_\omega^\perp{\tens}p_\omega^\perp)\Delta_\SS(a)(p_\omega^\perp{\tens}p_\omega^\perp)\bigr)\\
&=p_\omega^\perp\bigl((\omega\tens\id)\Delta_\SS(a)\bigr)p_\omega^\perp=p_\omega^\perp(a\star\omega)p_\omega^\perp=\bh_\omega(p_\omega^\perp{a}p_\omega^\perp)p_\omega^\perp.
\end{align*}
This proves right invariance of $\bh_\omega$. Left invariance is proved similarly.
\end{proof}

Interestingly the following "converse" of Theorem \ref{DeltaOnX} holds:

\begin{proposition}
Let $\SS$ be a quantum semigroup with weak cancellation and let $\omega\in\C_0(\SS)^*$ be an idempotent state. Denote by $p_\omega^\perp \in\C_0(\SS)^{**}$ the support of $\omega$ and $\sX_\omega=p_\omega^\perp\C_0(\SS)p_\omega^\perp$. Then $p_\omega^\perp$ is a compact projection if and only if there exists $e\in\C_0(\SS)$ such that $\omega(e)=1$ and
\begin{equation}\label{compact1}
(p_\omega^\perp\tens{p_\omega^\perp})\Delta_\SS(e)(p_\omega^\perp\tens{p_\omega^\perp})\in\sX_\omega\tens\sX_\omega.
\end{equation}
\end{proposition}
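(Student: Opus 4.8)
The statement is an equivalence, so the plan is to establish the two implications separately; the forward one is essentially a repackaging of facts already proved, while the converse carries the real content.

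For the implication ``$p_\omega^\perp$ compact $\Rightarrow$ \eqref{compact1}'' I would first produce a witness $e$. Since $\omega$ is a nonzero state there is $a\in\C_0(\SS)$ with $\omega(a)\neq0$, and $e=a/\omega(a)$ satisfies $\omega(e)=1$; alternatively compactness supplies a positive norm-one $x$ with $xp_\omega^\perp=p_\omega^\perp$, whence $\omega(x)=\omega(p_\omega^\perp{x}p_\omega^\perp)=\omega(p_\omega^\perp)=1$. Compactness of $p_\omega^\perp$ lets me invoke Theorem \ref{DeltaOnX}, so $\Delta_\omega$ maps $\sX_\omega$ into $\sX_\omega\tens\sX_\omega$; the explicit formula of Remark \ref{innywzor} then gives $(p_\omega^\perp\tens{p_\omega^\perp})\Delta_\SS(e)(p_\omega^\perp\tens{p_\omega^\perp})=\Delta_\omega(p_\omega^\perp{e}p_\omega^\perp)\in\sX_\omega\tens\sX_\omega$, which is exactly \eqref{compact1}.

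For the converse I would argue in two steps, writing $W=(p_\omega^\perp\tens{p_\omega^\perp})\Delta_\SS(e)(p_\omega^\perp\tens{p_\omega^\perp})$. First, by Proposition \ref{MultDom} both $\omega\star{e}$ and $(\omega\star{e})^*=\omega\star{e^*}$ lie in the multiplicative domain of $\omega$, and $\omega(\omega\star{e})=\omega(e)=1$. I would then rerun the Cauchy--Schwarz computation from the proof of Proposition \ref{ecpt} to obtain $z-z(\omega\star{e})\in{N_\omega}$ for every $z\in\C_0(\SS)^{**}$; the key observation is that this computation uses only membership in the multiplicative domain together with $\omega(e)=1$, and never the stronger assumption $\omega\star{e}\in\C_0(\SS)$ that was available in Proposition \ref{ecpt}. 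Putting $z=\I$ yields $(\omega\star{e})p_\omega^\perp=p_\omega^\perp$, and hence $p_\omega^\perp(\omega\star{e})p_\omega^\perp=p_\omega^\perp$. Second, I would slice the hypothesis: applying $(\id\tens\omega)$ to $W$, using bimodularity of the slice in the first leg together with the support identities $\omega\cdot{p_\omega^\perp}=p_\omega^\perp\cdot\omega=\omega$, gives $(\id\tens\omega)(W)=p_\omega^\perp\bigl((\id\tens\omega)\Delta_\SS(e)\bigr)p_\omega^\perp=p_\omega^\perp(\omega\star{e})p_\omega^\perp$. Since $W\in\sX_\omega\tens\sX_\omega$ and the slice map is norm-continuous, $(\id\tens\omega)(W)\in\sX_\omega$. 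Combining the two steps, $p_\omega^\perp=p_\omega^\perp(\omega\star{e})p_\omega^\perp=(\id\tens\omega)(W)\in\sX_\omega=p_\omega^\perp\C_0(\SS)p_\omega^\perp$, so $p_\omega^\perp$ is compact by the converse part of Proposition \ref{prop_perp_belongs}.

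I expect the main obstacle to be the converse direction, and specifically the two observations that make it run: that the multiplicative-domain identity of Proposition \ref{ecpt} still forces $(\omega\star{e})p_\omega^\perp=p_\omega^\perp$ under the present weaker hypothesis, and that the assumption $W\in\sX_\omega\tens\sX_\omega$ is preserved by the slice $(\id\tens\omega)$, placing $p_\omega^\perp$ inside $\sX_\omega$ so that Proposition \ref{prop_perp_belongs} applies. The routine but necessary bookkeeping is the slice-map algebra, i.e.\ verifying $(\id\tens\omega)(W)=p_\omega^\perp(\omega\star{e})p_\omega^\perp$ from the bimodularity of $(\id\tens\omega)$ and the support identities for $\omega$.
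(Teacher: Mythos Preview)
Your proposal is correct and follows essentially the same route as the paper. The forward direction is identical (invoke Theorem~\ref{DeltaOnX} and Remark~\ref{innywzor}); for the converse, the paper slices \eqref{compact1} with $(\id\tens\omega)$ to get $p_\omega^\perp(\omega\star e)p_\omega^\perp\in\sX_\omega$, then cites the identity \eqref{ompom0} (really its left-handed analogue) from the proof of Theorem~\ref{Xomega} to obtain $p_\omega^\perp(\omega\star e)p_\omega^\perp=\omega(e)p_\omega^\perp=p_\omega^\perp$, and concludes via Proposition~\ref{prop_perp_belongs}---your rerun of the Cauchy--Schwarz computation from Proposition~\ref{ecpt} is just an alternative derivation of that same identity, and arguably cleaner since \eqref{ompom0} sits inside a theorem whose hypotheses include compact support.
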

\begin{proof}
If $p_\omega^\perp$ is compact then $(p_\omega^\perp\tens{p_\omega^\perp})\Delta_\SS(e)(p_\omega^\perp\tens{p_\omega^\perp})\in\sX_\omega\tens\sX_\omega$ for every $e\in\C_0(\SS)$ due to Theorem \ref{DeltaOnX}.

Suppose conversely that there exists $e\in\C_0(\SS)$ such that $\omega(e)=1$ and \eqref{compact1} is satisfied. Applying $(\id\tens\omega)$ to \eqref{compact1} we get $p_\omega^\perp(\omega\star{e})p_\omega^\perp\in\sX_\omega$. Using Equation \eqref{ompom0} we get $p_\omega^\perp(\omega\star{e})p_\omega^\perp=\omega(e)p_\omega^\perp=p_\omega^\perp\in\sX_\omega$ and by Proposition \ref{prop_perp_belongs} we conclude that $p_\omega^\perp$ is compact.
\end{proof}

The facts established so far in this section are that if $\SS$ is a quantum semigroup with weak cancellation and $\omega\in\C_0(\SS)^*$ is an idempotent state with compact support (but cf.~Proposition \ref{ecpt} and Remark \ref{Rem_proper}) then there exits a proto-compact quantum hypergroup $(\sX_\omega,\Delta_\omega,\bh_\omega)$ defined as above and clearly we have
\[
\omega=\bh_\omega\comp\pi_\omega,
\]
where the map $\pi_\omega :\C_0(\SS)\ni{a}\mapsto{p_\omega^\perp}ap_\omega^\perp\in\sX_\omega$ is a completely positive and surjective. Moreover by Remark \ref{innywzor} we have
\begin{equation}\label{Deltapiomega}
\Delta_\omega\comp\pi_\omega=(\pi_\omega\tens\pi_\omega)\comp\Delta_\SS.
\end{equation}

One can therefore say that an idempotent state with compact support arises as the Haar state on a sub-proto-compact quantum hypergroup. As we already mentioned in the Introduction such a realization of $\omega$ is not unique. For example if $\sY=\CC$ with trivial proto-compact quantum hypergroup structure (in fact it is the trivial group) and $\pi_\sY=\omega:\C_0(\SS)\to\sY$ we also have $\omega=\bh_\sY\comp\pi_\sY$ with the obvious choice for $\bh_\sY$.

Quite obviously if $(\sY,\Delta_\sY,\bh_\sY)$ is a proto-compact quantum hypergroup and $\pi_\sY$ is a completely positive map $\C_0(\SS)\to\sY$ such that $(\pi_\sY\tens\pi_\sY)\comp\Delta_\SS=\Delta_\sY\comp\pi_\sY$ then $\omega=\bh_\sY\comp\pi_\sY$ is an idempotent state, so our considerations so far establish that all idempotent states on $\C_0(\SS)$ arise as Haar states on sub-proto-compact quantum hypergroups of $\SS$.

The role of the particular realization of $\omega$ through $\pi_\omega$ as described above is summarized in the next theorem.

\begin{theorem}\label{univ_pi_omega}
Let $\SS$ be a quantum semigroup with weak cancellation and let $\omega\in\C_0(\SS)^*$ be an idempotent state with compact support. Let $(\sY,\Delta_\sY,\bh_\sY)$ be a proto-compact quantum hypergroup and $\pi_\sY:\C_0(\SS)\to\sY$ a completely positive, completely contractive, surjective map such that $\omega=\bh_\sY\comp\pi_\sY$ and
\[
\Delta_\sY\comp\pi_\sY=(\pi_\sY\tens\pi_\sY)\comp\Delta_\SS.
\]
Then there exists a unique completely positive, completely contractive, surjective map $\pi_\sY^\omega:\sX_\omega\to\sY$ such that $\pi_\sY=\pi_\sY^\omega\comp\pi_\omega$ and
\begin{equation}\label{Delta3}
\Delta_\sY\comp\pi_\sY^\omega=(\pi_\sY^\omega\tens\pi_\sY^\omega)\comp\Delta_\omega.
\end{equation}
\end{theorem}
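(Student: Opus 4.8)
The plan is to construct $\pi_\sY^\omega$ as the restriction to $\sX_\omega$ of the normal extension of $\pi_\sY$ to the bidual, the whole difficulty being concentrated in showing that this extension annihilates $p_\omega$. First I would pass to biduals. Since $\pi_\sY$ is completely positive and completely contractive, its bitranspose $\pi_\sY^{**}:\C_0(\SS)^{**}\to\sY^{**}$ is a normal, completely positive, completely contractive map agreeing with $\pi_\sY$ on $\C_0(\SS)$; likewise $\bh_\sY$ extends to a normal state on $\sY^{**}$, and from $\omega=\bh_\sY\comp\pi_\sY$ one obtains $\omega=\bh_\sY\comp\pi_\sY^{**}$ on $\C_0(\SS)^{**}$ by normality.

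The key step---and the only real obstacle---is the identity $\pi_\sY^{**}(p_\omega)=0$. To prove it I would take an approximate unit $(u_\alpha)$ of the left ideal $C_\omega$ as in the beginning of this section, so that $u_\alpha\to p_\omega$ in the weak${}^*$ topology. Since each $u_\alpha\in C_\omega$ we have $\omega(u_\alpha^*u_\alpha)=0$, hence $\omega(u_\alpha)=0$ by the Cauchy--Schwarz inequality for $\omega$. Consequently $\pi_\sY(u_\alpha)$ is a positive element of $\sY$ with $\bh_\sY(\pi_\sY(u_\alpha))=\omega(u_\alpha)=0$, and faithfulness of $\bh_\sY$ forces $\pi_\sY(u_\alpha)=0$ for every $\alpha$. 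Normality of $\pi_\sY^{**}$ then gives $\pi_\sY^{**}(p_\omega)=\lim_\alpha\pi_\sY(u_\alpha)=0$. From here a standard $2\times2$ positivity argument (applying the $2$-positive map $\pi_\sY^{**}$ to $\bigl(\begin{smallmatrix}p_\omega&p_\omega z\\z^*p_\omega&z^*z\end{smallmatrix}\bigr)\geq0$) yields $\pi_\sY^{**}(p_\omega z)=\pi_\sY^{**}(z p_\omega)=0$ for all $z$, and therefore
\[
\pi_\sY^{**}(z)=\pi_\sY^{**}(p_\omega^\perp z p_\omega^\perp),\qqquad{z}\in\C_0(\SS)^{**}.
\]

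With this in hand I would define $\pi_\sY^\omega$ to be the restriction of $\pi_\sY^{**}$ to $\sX_\omega=p_\omega^\perp\C_0(\SS)p_\omega^\perp$; being a restriction of $\pi_\sY^{**}$ it is automatically completely positive and completely contractive, and it lands in $\sY$ by the next computation. The displayed identity, applied to $z=a\in\C_0(\SS)$, gives $\pi_\sY^\omega(\pi_\omega(a))=\pi_\sY^{**}(p_\omega^\perp a p_\omega^\perp)=\pi_\sY^{**}(a)=\pi_\sY(a)$, i.e.~$\pi_\sY=\pi_\sY^\omega\comp\pi_\omega$; surjectivity of $\pi_\sY$ then makes $\pi_\sY^\omega$ surjective, and surjectivity of $\pi_\omega$ makes $\pi_\sY^\omega$ the unique map satisfying $\pi_\sY=\pi_\sY^\omega\comp\pi_\omega$.

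Finally, for \eqref{Delta3} I would evaluate both sides on $\pi_\omega(a)$. Using the formula of Remark \ref{innywzor} together with the two-variable analogue of the displayed identity---the normal $2$-positive map $\pi_\sY^{**}\tens\pi_\sY^{**}$ kills $p_\omega\tens\I$ and $\I\tens p_\omega$, hence peels off the outer $(p_\omega^\perp\tens p_\omega^\perp)$ factors---one computes
\[
(\pi_\sY^\omega\tens\pi_\sY^\omega)\Delta_\omega(\pi_\omega(a))=(\pi_\sY^{**}\tens\pi_\sY^{**})\Delta_\SS(a)=(\pi_\sY\tens\pi_\sY)\Delta_\SS(a)=\Delta_\sY(\pi_\sY(a))=\Delta_\sY(\pi_\sY^\omega(\pi_\omega(a))),
\]
and surjectivity of $\pi_\omega$ promotes this to \eqref{Delta3} on all of $\sX_\omega$. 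The only points needing care beyond the key step are the identification of the minimal-tensor map $\pi_\sY^\omega\tens\pi_\sY^\omega$ with the restriction of $\pi_\sY^{**}\tens\pi_\sY^{**}$, and the compatibility of the latter on $\M(\C_0(\SS)\tens\C_0(\SS))$ with the extension of $\pi_\sY\tens\pi_\sY$ used in the hypothesis on $\pi_\sY$; both are routine.
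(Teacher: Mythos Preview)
Your argument is correct and reaches the same conclusion, but the route is genuinely different from the paper's. The paper works at the level of quotients: it invokes \cite[Proposition 4.4]{Akemann-Pedersen-Tomiyama} to write any $a\in\ker\pi_\omega$ as $b+c$ with $b,c^*\in C_\omega$, uses faithfulness of $\bh_\sY$ together with the Cauchy--Schwarz inequality for $\pi_\sY$ to conclude $\pi_\sY(b)=\pi_\sY(c)=0$, and thus obtains $\ker\pi_\omega\subset\ker\pi_\sY$. The induced map $\pi_\sY^\omega$ is then shown to be completely contractive via the isometric identification $\sX_\omega\cong\C_0(\SS)/\ker\pi_\omega$ (again from \cite{Akemann-Pedersen-Tomiyama}), and complete positivity is deduced separately by first proving that $\pi_\sY^\omega$ is unital and then appealing to \cite[Proposition 2.11]{Paulsen}. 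The intertwining relation \eqref{Delta3} is obtained by composing \eqref{Deltapiomega} with $\pi_\sY^\omega\tens\pi_\sY^\omega$ and cancelling the surjective $\pi_\omega$.

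Your bidual approach trades the Akemann--Pedersen--Tomiyama structure theorem for a direct computation: the approximate-unit argument giving $\pi_\sY^{**}(p_\omega)=0$, followed by the $2\times2$ trick, is an efficient replacement for the decomposition $\ker\pi_\omega=C_\omega+C_\omega^*$. The payoff is that complete positivity and complete contractivity of $\pi_\sY^\omega$ come for free as properties of a restriction, with no separate unitality argument needed. The cost is that you must work with $\sY^{**}$ as an operator system and keep track of the various tensor-product extensions, which you flag as routine; the paper avoids biduals of operator systems entirely. For \eqref{Delta3}, note that once you have $\pi_\sY=\pi_\sY^\omega\comp\pi_\omega$ you can simply follow the paper's last paragraph and compose \eqref{Deltapiomega} with $\pi_\sY^\omega\tens\pi_\sY^\omega$, which sidesteps the issue of identifying $\pi_\sY^{**}\tens\pi_\sY^{**}$ on multipliers.
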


\begin{proof}
Let $a\in\ker{\pi_\omega}$. By \cite[Proposition 4.4]{Akemann-Pedersen-Tomiyama} we have $a=b+c$, where $b,c^*\in{C_\omega}$ and we recall that $C_\omega=\bigl\{x\in\C_0(\SS)\,\bigl|\bigr.\,\omega(x^*x)=0\bigr\}$. Then
\[
\bh_\sY\bigl(\pi_\sY(b^*b)\bigr)=\omega(b^*b)=0,
\]
and since $\bh_\sY$ is faithful, we obtain $\pi_\sY(b^*b)=0$. By Cauchy-Schwarz inequality we have $\pi_\sY(b)=0$. Similarly $\pi_\sY(c)=0$, and consequently $a\in\ker{\pi_\sY}$. Thus $\ker{\pi_\omega}\subset\ker{\pi_\sY}$ and hence there exists a linear surjective map $\pi^\omega_\sY:\sX_\omega\to\sY$ such that $\pi_\sY=\pi_\sY^\omega\comp\pi_\omega$. Since the isomorphism of $\sX_\omega$ and $\C_0(\SS)/\ker\pi_\omega$ is isometric (\cite[Proposition 4.4]{Akemann-Pedersen-Tomiyama}) the induced map $\pi_\sY^\omega$ is a contraction. Indeed, given $y \in\sX_\omega$ and $\varepsilon>0$ there exists $x\in\C_0(\SS)$ such $y = \pi_\omega(x)$ and $\|y\|+\varepsilon\geq \|x\|$. In particular \[\|\pi_\sY^\omega(y)\| = \|\pi_\sY(x)\|\leq\|x\|\leq \|y\|+\varepsilon\] and we see that $\pi^\omega_\sY$ is contractive.

For any $n>1$ the diagram
\[
\xymatrix{
M_n\tens\C_0(\SS)\ar[rr]^{\id\tens\pi_\omega}\ar[rd]_{\id\tens\pi_\sY}&&M_n\tens\sX_\omega\ar[ld]^{\id\tens\pi_\sY^\omega}\\
&M_n\tens\sY
}
\]
is commutative.\footnote{We have $\sum{e_{ij}\tens{a_{ij}}}\in\ker{\id\tens\pi_\omega}$ if and only if $a_{i,j}\in\ker{\pi_\omega}$ for all $i,j$, so $a_{i,j}\in\ker{\pi_\sY}$ for all $i,j$ which is equivalent to $\sum{e_{ij}\tens{a_{ij}}}\in\ker{\id\tens\pi_\sY}$, i.e.~$\ker{\id\tens\pi_\omega}\subset\ker{\id\tens\pi_\sY}$.} In order to see that $\id\tens\pi_\sY: M_n\tens\sX_\omega\to{M_n}\tens\sY $ is a contractive map for every $n>1$ it is enough to note that the left kernel of $\id\tens\pi_\omega:M_n\tens\C_0(\SS)\to{M_n}\tens\sX_\omega$ is generated by the projection $\I\tens{p_\omega^\perp}\in(M_n\tens\C_0(\SS))^{**}$ and then use \cite[Proposition 4.4]{Akemann-Pedersen-Tomiyama} the way it was used in the case $n=1$. Thus $\pi^\omega_\sY$ is a complete contraction.

In order to see that $\pi^\omega_\sY$ is completely positive we must show that it is unital and use \cite[Proposition 2.11]{Paulsen}. For this let $q=\pi^\omega_\sY(p_\omega^\perp)$. Now since $p_\omega^\perp$ is compact, there exists a positive $x\in\C_0(\SS)$ such that $p_\omega^\perp=\pi_\omega(x)$. Therefore $q$ which is also equal to $\pi_\sY(x)$ is positive and of norm less or equal to $1$. Now
\[
\bh_\sY(q)=\omega(x)=\bh_\omega(p_\omega^\perp)=1,
\]
so $\bh_\sY(\I_\sY-q)=0$. Hence $q=\I_\sY$ by faithfulness of $\bh_\sY$.

Finally to prove \eqref{Delta3} we apply $\pi^\omega_{\sY}\tens\pi^\omega_{\sY}$ to both sides of \eqref{Deltapiomega} obtaining
\[
(\pi_\sY^\omega\tens\pi_\sY^\omega)\comp\Delta_\omega\comp\pi_\omega=(\pi_\sY\tens\pi_\sY)\comp\Delta_\SS.
\]
But the right hand side is $\Delta_\sY\comp\pi_\sY=\Delta_\sY\comp\pi_\sY^\omega\comp\pi_\omega$, so we can cancel the surjective $\pi_\omega$ on both sides obtaining $(\pi_\sY^\omega\tens\pi_\sY^\omega)\comp\Delta_\omega=\Delta_\sY\comp\pi_\sY^\omega$.
\end{proof}

\section{Idempotent states of Haar type}\label{HaarType}

Let $\SS$ be a quantum semigroup and let $\omega\in\C_0(\SS)^*$ be an idempotent state. The results of Section \ref{kik} say that at least if $\SS$ has weak cancellation and $\omega$ has compact support (cf.~Proposition \ref{ecpt}) then $\omega$ factorizes through a proto-compact quantum hypergroup and this factorization may be chosen to have the universal property described in Theorem \ref{univ_pi_omega}. It is natural to distinguish those idempotent states which factor through a compact quantum subgroup of $\SS$:

\begin{definition}\label{Def_Haar_type}
Let $\SS$ be a quantum semigroup and let $\omega\in\C_0(\SS)^*$ be an idempotent state. We say that $\omega$ is \emph{of Haar type} if there is a compact quantum group $\KK$ with faithful Haar measure $\bh_\KK$ and a surjective $\pi_\KK\in\Mor(\C_0(\SS),\C(\KK))$ such that $(\pi_\KK\tens\pi_\KK)\comp\Delta_\SS=\Delta_\KK\comp\pi_\KK$ and $\omega=\bh_\KK\comp\pi_\KK$.
\end{definition}

\begin{remark}
Let $\sA$ be a \cst-algebra, $\sB$ a unital $\cst$-algebra and $\pi\in\Mor(\sA,\sB)$ a surjective morphism. The central carrier $p\in\sA^{**}$ of $\pi$ is easily checked to be compact. Assuming that $\sA=\C_0(\SS)$, $\sB=\C(\KK)$ and $\omega=\bh_\KK\comp\pi_\KK$ as considered in Definition \ref{Def_Haar_type} we conclude that $p_\omega^\perp$ is a compact projection. Moreover, in this case $p_\omega^\perp$ is central.
\end{remark}

\begin{theorem}\label{equiv}
Let $\SS$ be a quantum semigroup with weak cancellation and let $\omega\in\C_0(\SS)^*$ be an idempotent state with compact support. Then the following conditions are equivalent:
\begin{enumerate}
\item\label{equiv1} the ideal $C_\omega$ is two sided,
\item\label{equiv2} $p_\omega^\perp$ is a central projection,
\item\label{equiv2.5} $\sX_\omega$ is a \cst-algebra and $\pi_\omega$ is a $*$-homomorphism,
\item\label{equiv3} $(\sX_\omega,\Delta_\omega)$ describes a compact quantum group,
\item\label{equiv4} $\omega$ is of Haar type.
\end{enumerate}
\end{theorem}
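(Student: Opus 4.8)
The plan is to prove the theorem as a single cycle of implications
\[
(\ref{equiv4})\Rightarrow(\ref{equiv1})\Rightarrow(\ref{equiv2})\Rightarrow(\ref{equiv2.5})\Rightarrow(\ref{equiv3})\Rightarrow(\ref{equiv4}),
\]
arranged so that four links are essentially formal and one analytic step carries the weight. For $(\ref{equiv4})\Rightarrow(\ref{equiv1})$ I would note that if $\omega=\bh_\KK\comp\pi_\KK$ with $\bh_\KK$ faithful then $C_\omega=\{a\in\C_0(\SS)\mid\bh_\KK(\pi_\KK(a)^*\pi_\KK(a))=0\}=\ker\pi_\KK$, which is two-sided because $\pi_\KK$ is a $*$-homomorphism. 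For $(\ref{equiv1})\Rightarrow(\ref{equiv2})$ I would use that a norm-closed left ideal is two-sided exactly when it is self-adjoint, so its weak${}^*$-closure is a weak${}^*$-closed two-sided ideal of $\C_0(\SS)^{**}$; since that closure is $N_\omega=\C_0(\SS)^{**}p_\omega$ and weak${}^*$-closed two-sided ideals of a von Neumann algebra have the form $\C_0(\SS)^{**}z$ with $z$ central, uniqueness of the support projection forces $p_\omega=z$, whence $p_\omega$ and $p_\omega^\perp$ are central.

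The link $(\ref{equiv2})\Rightarrow(\ref{equiv2.5})$ is then immediate: with $p_\omega^\perp$ central one has $\pi_\omega(a)=p_\omega^\perp ap_\omega^\perp=p_\omega^\perp a$ and $\pi_\omega(a)\pi_\omega(b)=p_\omega^\perp ap_\omega^\perp b=p_\omega^\perp ab=\pi_\omega(ab)$, so $\pi_\omega$ is a surjective $*$-homomorphism and $\sX_\omega=p_\omega^\perp\C_0(\SS)\cong\C_0(\SS)/C_\omega$ is a unital \cst-algebra.

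The substance of the theorem lies in $(\ref{equiv2.5})\Rightarrow(\ref{equiv3})$, which I expect to be the main obstacle. Now $\pi_\omega$ is a surjective morphism with $\Delta_\omega\comp\pi_\omega=(\pi_\omega\tens\pi_\omega)\comp\Delta_\SS$, and the idea is to transport the weak cancellation of $\SS$ to $(\sX_\omega,\Delta_\omega)$ along $\pi_\omega$. Given a non-zero $\nu\in\sX_\omega^*$, put $\mu=\nu\comp\pi_\omega\neq0$; using that $\pi_\omega$ is a $*$-homomorphism intertwining the comultiplications one verifies the slice identity $\pi_\omega\bigl((b\cdot\mu)\star a\bigr)=(\pi_\omega(b)\cdot\nu)\star\pi_\omega(a)$ together with its left-handed counterpart. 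Because $\pi_\omega$ is surjective and strictly continuous, applying it to the strictly dense sets guaranteed by weak cancellation of $\SS$ yields strict (hence, as $\sX_\omega$ is unital, norm) density of $\{(\beta\cdot\nu)\star\alpha\mid\alpha,\beta\in\sX_\omega\}$ in $\sX_\omega$, and similarly on the other side. Thus $(\sX_\omega,\Delta_\omega)$ is a \emph{compact} quantum semigroup with weak cancellation; by the Murphy--Tuset equivalence recalled above its weak cancellation upgrades to proper cancellation, and a unital \cst-bialgebra with proper (density) cancellation is a compact quantum group by Woronowicz's theorem. The delicate points to watch are the behaviour of the strict topology under $\pi_\omega$ and the verification that the relevant densities genuinely take place inside $\sX_\omega\tens\sX_\omega$.

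Finally $(\ref{equiv3})\Rightarrow(\ref{equiv4})$ closes the cycle: taking $\KK$ to be the compact quantum group $(\sX_\omega,\Delta_\omega)$, its Haar measure coincides with $\bh_\omega$ by invariance and uniqueness (Theorem~\ref{Xomega}), and $\pi_\omega$, a surjective element of $\Mor(\C_0(\SS),\C(\KK))$ satisfying $(\pi_\omega\tens\pi_\omega)\comp\Delta_\SS=\Delta_\omega\comp\pi_\omega$, realizes $\omega=\bh_\omega\comp\pi_\omega$ as an idempotent of Haar type. The one point requiring care is that the abstract compact-quantum-group structure of $(\sX_\omega,\Delta_\omega)$ must be matched with the map $\pi_\omega$: one has to confirm that $\Delta_\omega$ being an honest $*$-homomorphism forces $\pi_\omega$ to be one as well (equivalently, that $p_\omega^\perp$ is central), which is precisely the information returned by the preceding links and which I would extract from the injectivity of $\Delta_\omega$ together with the cancellation conditions.
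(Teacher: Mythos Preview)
Your cycle matches the paper's argument step for step: the paper proves $(\ref{equiv1})\Rightarrow(\ref{equiv2})\Rightarrow(\ref{equiv2.5})\Rightarrow(\ref{equiv3})$, asserts $(\ref{equiv3})\Leftrightarrow(\ref{equiv4})$ ``by definition'', and closes with $(\ref{equiv4})\Rightarrow(\ref{equiv1})$. Your treatment of $(\ref{equiv2.5})\Rightarrow(\ref{equiv3})$ via the slice identity $\pi_\omega\bigl((b\cdot\mu)\star a\bigr)=(\pi_\omega(b)\cdot\nu)\star\pi_\omega(a)$ is more explicit than the paper's one-line remark that ``the weak cancellation laws of $\SS$ carry over'', but the content is the same, and both invoke \cite{Murphy-Tuset}.

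You are right to flag $(\ref{equiv3})\Rightarrow(\ref{equiv4})$ as the delicate link, and in fact your proposed fix does not work. To exhibit $\omega$ as of Haar type you need $\pi_\omega\in\Mor(\C_0(\SS),\sX_\omega)$, i.e.\ a $*$-homomorphism, but condition $(\ref{equiv3})$ alone says nothing about $\pi_\omega$. Your suggestion to use injectivity of $\Delta_\omega$ leads to
\[
\Delta_\omega\bigl(\pi_\omega(ab)\bigr)=(\pi_\omega\tens\pi_\omega)\bigl(\Delta_\SS(a)\Delta_\SS(b)\bigr),
\qquad
\Delta_\omega\bigl(\pi_\omega(a)\pi_\omega(b)\bigr)=\bigl((\pi_\omega\tens\pi_\omega)\Delta_\SS(a)\bigr)\bigl((\pi_\omega\tens\pi_\omega)\Delta_\SS(b)\bigr),
\]
and comparing these requires $\pi_\omega\tens\pi_\omega$ to be multiplicative on the range of $\Delta_\SS$, which is the same question pushed one level up. The appeal to ``the information returned by the preceding links'' is circular in a cycle proof. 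Note that the paper glosses over this very point with the phrase ``by definition of an idempotent state of Haar type''; in effect it is proving $(\ref{equiv2.5})\Rightarrow(\ref{equiv3})\text{ and }(\ref{equiv4})$ simultaneously rather than a free-standing $(\ref{equiv3})\Rightarrow(\ref{equiv4})$. The honest reorganisation is to close the cycle through $(\ref{equiv2.5})\Rightarrow(\ref{equiv4})$ (immediate, since under $(\ref{equiv2.5})$ you already have $\pi_\omega$ multiplicative and the compact quantum group structure) and treat $(\ref{equiv3})$ as following from $(\ref{equiv2.5})$; a genuinely independent implication out of $(\ref{equiv3})$ would require a separate argument that neither you nor the paper supplies.
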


\begin{proof}
If $C_\omega$ is a two sided ideal then $p_\omega$ and hence $p_\omega^\perp$ is central (\cite[Section 3.20]{Stratila-Zsido}). Thus \eqref{equiv1} implies \eqref{equiv2}.

Now if $p_\omega^\perp$ is central, the operator system $\sX_\omega$ is actually a unital \cst-algebra and $\pi_\omega$ is a $*$-homomorphism because it is the composition of a $*$-homomorphism $\tDelta$ and the map $x\tens{y}\mapsto{xp_\omega^\perp}\tens{yp_\omega^\perp}$ on $\C_0(\SS)^{**}\operatorname{\bar{\tens}}\C_0(\SS)^{**}$ which is also a $*$-homomorphism. This means that \eqref{equiv2} implies \eqref{equiv2.5}.

Assume \eqref{equiv2.5}. Then for $x,y\in\sX_\omega$ we can choose $a,b\in\C_0(\SS)$ such that $x=\pi_\omega(a)$ and $y=\pi_\omega(b)$. Moreover we have $xy=\pi_\omega(ab)$ and
\begin{align*}
\Delta_\omega(xy)=\bigl((\pi_\omega\tens\pi_\omega)\comp\Delta_\SS\bigr)(ab)&=\bigl((\pi_\omega\tens\pi_\omega)\comp\Delta_\SS\bigr)(a)\bigl((\pi_\omega\tens\pi_\omega)\comp\Delta_\SS\bigr)(b)\\
&=(\Delta_\omega\comp\pi_\omega)(a)(\Delta_\omega\comp\pi_\omega)(b)=\Delta_\omega(x)\Delta_\omega(y),
\end{align*}
so that $\Delta_\omega$ is a $*$-homomorphism which is moreover unital. $(\sX_\omega,\Delta_\omega)$ defines a compact quantum semigroup $\KK$ by $\C(\KK)=\sX_\omega$, and since $\pi_\omega$ is a surjective $*$-homomorphism intertwining comultiplications, the weak cancellation laws of $\SS$ carry over to weak cancellation laws for $\KK$. Thus by \cite[Theorem 3.2]{Murphy-Tuset} $\KK$ is a compact quantum group whose Haar measure is faithful by Theorem \ref{Xomega}. This shows that \eqref{equiv2.5} implies \eqref{equiv3}.

\eqref{equiv3} and \eqref{equiv4} are equivalent by definition of an idempotent state of Haar type and finally if \eqref{equiv4} holds then $\omega=\bh_\KK\comp\pi_\KK$, where $\pi_\KK$ is a $*$-homomorphism from $\C_0(\SS)$ onto $\C(\KK)$, where $\KK$ is a compact quantum group with faithful Haar measure $\bh_\KK$. Thus
\[
C_\omega=\bigl\{b\in\C_0(\SS)\,\bigl|\bigr.\,\omega(b^*b)=0\bigr\}=\bigl\{b\in\C_0(\SS)\,\bigl|\bigr.\,\bh_\KK\bigl(\pi_\KK(b)^*\pi_\KK(b)\bigr)=0\bigr\}=\ker\pi_\omega
\]
by faithfulness of $\bh_\KK$. Hence $C_\omega$ is a two sided ideal.
\end{proof}

\begin{corollary}\label{trace}
Let $\SS$ be a quantum semigroup with weak cancellation and let $\omega\in\C_0(\SS)^*$ be a tracial idempotent state with compact support. Then $\omega$ is of Haar type.
\end{corollary}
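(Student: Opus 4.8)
The plan is to reduce everything to Theorem~\ref{equiv} and let traciality do the rest. By that theorem it suffices, for a tracial idempotent state $\omega$ with compact support, to verify any one of the listed equivalent conditions; the most economical is the condition that the left ideal $C_\omega$ be two-sided, i.e.\ condition~\eqref{equiv1}. Once this is known, the implication \eqref{equiv1}$\Rightarrow$\eqref{equiv4} of Theorem~\ref{equiv} immediately yields that $\omega$ is of Haar type.

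First I would record two elementary facts about the kernels of $\omega$, both following from positivity of $\omega$. Recall $C_\omega=\bigl\{x\in\C_0(\SS)\,\bigl|\bigr.\,\omega(x^*x)=0\bigr\}$. For $a,x\in\C_0(\SS)$ the operator inequality $x^*a^*ax\le\|a\|^2\,x^*x$ together with positivity of $\omega$ gives $\omega\bigl((ax)^*(ax)\bigr)\le\|a\|^2\omega(x^*x)$, so $C_\omega$ is a left ideal (as already recorded in Section~\ref{kik}). By the same reasoning the set $C_\omega'=\bigl\{x\in\C_0(\SS)\,\bigl|\bigr.\,\omega(xx^*)=0\bigr\}$ is a right ideal, since $xaa^*x^*\le\|a\|^2\,xx^*$ yields $\omega\bigl((xa)(xa)^*\bigr)\le\|a\|^2\omega(xx^*)$.

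The only place where traciality is used is the identification $C_\omega=C_\omega'$. Indeed, since $\omega(x^*x)=\omega(xx^*)$ for every $x\in\C_0(\SS)$, the defining conditions of $C_\omega$ and $C_\omega'$ coincide, so these two sets are literally equal. Hence $C_\omega$ is simultaneously a left ideal and a right ideal of $\C_0(\SS)$, i.e.\ it is two-sided, which is condition~\eqref{equiv1} of Theorem~\ref{equiv}. Invoking that theorem concludes that $\omega$ is of Haar type.

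I do not expect any genuine obstacle here: the entire structural content has already been packaged in Theorem~\ref{equiv}, and the role of the tracial hypothesis is precisely to remove the left/right asymmetry that in general prevents the support projection $p_\omega^\perp$ from being central. The only minor point to be careful about is that the left and right kernels really are one-sided ideals, which is immediate from positivity of $\omega$ and the two displayed operator inequalities.
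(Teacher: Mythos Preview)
Your proposal is correct and follows exactly the paper's approach: the paper's proof is the single line ``If $\omega$ is a trace, clearly condition \eqref{equiv1} of Theorem \ref{equiv} is satisfied,'' and you have simply spelled out why this is clear (traciality forces $C_\omega=C_\omega'$, hence the left ideal $C_\omega$ is also a right ideal).
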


\begin{proof}
If $\omega$ is a trace, clearly condition \eqref{equiv1} of Theorem \ref{equiv} is satisfied.
\end{proof}

\begin{remark}
Suppose that $\SS$ is a locally compact quantum semigroup with weak cancellation and $\omega\in\C_0(\SS)^*$ an idempotent state which is central. We could not prove that in this case $p_\omega^\perp$ is compact which then would imply that $\omega$ is of Haar type. However if $\SS$ is a classical semigroup this follows from \cite{Mukherjea_Tserpes} as explained in Section \ref{Intro}.
\end{remark}

\section{Further questions and comments}

In what follows we adopt the convention and notation concerning idempotent state $\omega\in\C_0(\SS)^*$ on a locally compact quantum semigroup $\SS$ satisfying weak cancellation law. Our study leads to the following questions:
\begin{enumerate}
\item Is $p_\omega^\perp\in\C_0(\SS)^{**}$ compact?
\item Is there an idempotent state $\omega$ such that $\sX_\omega$ is not a \cst-subalgebra of $p_\omega^\perp\C_0(\SS)^{**}p_\omega^\perp$? An example of a \cst-algebra $\sA$ and a projection $p\in\sA^{**}$ such that $p\sA{p}$ is not a $\cst$-subalgebra of $p\sA^{**}p$ was given in \cite[Theorem 4.5]{Akemann-Pedersen-Tomiyama}. As was proved in \cite[Theorem 3.1]{Brown}, $p\sA{p}$ does form a \cst-subalgebra of $p\sA^{**}p$ if and only if $p$ satisfies so called MSQC-condition: for every self-adjoint $h\in{p}\sA{p}$ there exists a self adjoint element $a\in\sA$ such that $h=ap=pa$.
\end{enumerate}

\subsection*{Acknowledgments}

The authors wish to thank Massoud Amini who greatly contributed to an early version of this paper and Adam Skalski and Haonan Zhang for fruitful discussions.


\begin{thebibliography}{10}

\bibitem{Akemann}
C.~A. Akemann.
\newblock A {G}elfand representation theory for {$C^{\ast}$}-algebras.
\newblock {\em Pacific J. Math.}, 39:1--11, 1971.

\bibitem{Akemann-Pedersen-Tomiyama}
C.~A. Akemann, G.~K. Pedersen, and J.~Tomiyama.
\newblock Multipliers of {$C\sp*$}-algebras.
\newblock {\em J. Functional Analysis}, 13:277--301, 1973.

\bibitem{Baaj-Blanchard-Skandalis}
S.~Baaj, E.~Blanchard, and G.~Skandalis.
\newblock Unitaires multiplicatifs en dimension finie et leurs sous-objets.
\newblock {\em Ann. Inst. Fourier (Grenoble)}, 49(4):1305--1344, 1999.

\bibitem{Baaj-Skandalis}
S.~Baaj and G.~Skandalis.
\newblock Unitaires multiplicatifs et dualit\'{e} pour les produits crois\'{e}s
  de {$C^*$}-alg\`ebres.
\newblock {\em Ann. Sci. \'{E}cole Norm. Sup. (4)}, 26(4):425--488, 1993.

\bibitem{Blecher-Neal}
D.~P. Blecher and M.~Neal.
\newblock Noncommutative topology and jordan operator algebras.
\newblock {\em Math. Nachr.}, 292(3):481--510, 2018.

\bibitem{Brown}
L.~G. Brown.
\newblock M{ASA}'s and certain type {I} closed faces of {$C^*$}-algebras.
\newblock In {\em Group representations, ergodic theory, and mathematical
  physics: a tribute to {G}eorge {W}. {M}ackey}, volume 449 of {\em Contemp.
  Math.}, pages 69--98. Amer. Math. Soc., Providence, RI, 2008.

\bibitem{Chapovsky-Vainerman}
Yu.~A. Chapovsky and L.~I. Vainerman.
\newblock Compact quantum hypergroups.
\newblock {\em J. Operator Theory}, 41(2):261--289, 1999.

\bibitem{Cohen}
P.~J. Cohen.
\newblock On a conjecture of {L}ittlewood and idempotent measures.
\newblock {\em Amer. J. Math.}, 82:191--212, 1960.

\bibitem{DKSS}
M.~Daws, P.~Kasprzak, A.~Skalski, and P.~M. So\l{}tan.
\newblock Closed quantum subgroups of locally compact quantum groups.
\newblock {\em Adv. Math.}, 231(6):3473--3501, 2012.

\bibitem{Franz-Skalski_new}
U.~Franz and A.~Skalski.
\newblock A new characterisation of idempotent states on finite and compact
  quantum groups.
\newblock {\em C. R. Math. Acad. Sci. Paris}, 347(17-18):991--996, 2009.

\bibitem{Franz-Skalski_new_e}
U.~Franz and A.~Skalski.
\newblock A new characterisation of idempotent states on finite and compact
  quantum groups.
\newblock {\em arXiv e-prints}, page arXiv:0906.2362, Jun 2009.

\bibitem{Franz-Skalski2009}
U.~Franz and A.~Skalski.
\newblock On idempotent states on quantum groups.
\newblock {\em J. Algebra}, 322(5):1774--1802, 2009.

\bibitem{Heyer}
H.~Heyer.
\newblock {\em Probability measures on locally compact groups}.
\newblock Springer-Verlag, Berlin-New York, 1977.
\newblock Ergebnisse der Mathematik und ihrer Grenzgebiete, Band 94.

\bibitem{Kac-Paljutkin}
G.~I. Kac and V.~G. Paljutkin.
\newblock Finite ring groups.
\newblock {\em Trudy Moskov. Mat. Ob\v{s}\v{c}.}, 15:224--261, 1966.

\bibitem{lattice}
P.~Kasprzak and P.~M. So{\l}tan.
\newblock The lattice of idempotent states on a locally compact quantum group.
\newblock {\em arXiv e-prints, To appear in Publ. Res. Inst. Math. Sci.}, page
  arXiv:1802.03953, Feb 2018.

\bibitem{Kawada-Ito}
Y.~Kawada and K.~It\^{o}.
\newblock On the probability distribution on a compact group. {I}.
\newblock {\em Proc. Phys.-Math. Soc. Japan (3)}, 22:977--998, 1940.

\bibitem{Kelley}
J.~L. Kelley.
\newblock Averaging operators on {$C_{\infty }(X)$}.
\newblock {\em Illinois J. Math.}, 2:214--223, 1958.

\bibitem{Kustermans}
J.~Kustermans.
\newblock Locally compact quantum groups in the universal setting.
\newblock {\em Internat. J. Math.}, 12(3):289--338, 2001.

\bibitem{Lance}
E.~C. Lance.
\newblock {\em Hilbert {$C^*$}-modules}, volume 210 of {\em London Mathematical
  Society Lecture Note Series}.
\newblock Cambridge University Press, Cambridge, 1995.
\newblock A toolkit for operator algebraists.

\bibitem{Masuda-Nakagami-Woronowicz}
T.~Masuda, Y.~Nakagami, and S.~L. Woronowicz.
\newblock A {$C^\ast$}-algebraic framework for quantum groups.
\newblock {\em Internat. J. Math.}, 14(9):903--1001, 2003.

\bibitem{Mukherjea_Tserpes}
A.~Mukherjea and N.~A. Tserpes.
\newblock Idempotent measures on locally compact semigroups.
\newblock {\em Proc. Amer. Math. Soc.}, 29:143--150, 1971.

\bibitem{Murphy-Tuset}
G.~J. Murphy and L.~Tuset.
\newblock Aspects of compact quantum group theory.
\newblock {\em Proc. Amer. Math. Soc.}, 132(10):3055--3067, 2004.

\bibitem{Pal}
A.~Pal.
\newblock A counterexample on idempotent states on a compact quantum group.
\newblock {\em Lett. Math. Phys.}, 37(1):75--77, 1996.

\bibitem{Paulsen}
V.~Paulsen.
\newblock {\em Completely bounded maps and operator algebras}, volume~78 of
  {\em Cambridge Studies in Advanced Mathematics}.
\newblock Cambridge University Press, Cambridge, 2002.

\bibitem{Pedersen}
G.~K. Pedersen.
\newblock {\em {$C^{\ast} $}-algebras and their automorphism groups}, volume~14
  of {\em London Mathematical Society Monographs}.
\newblock Academic Press, Inc. [Harcourt Brace Jovanovich, Publishers],
  London-New York, 1979.

\bibitem{pym1962}
J.~S. Pym.
\newblock Idempotent measures on semigroups.
\newblock {\em Pacific J. Math.}, 12:685--698, 1962.

\bibitem{Rudin}
W.~Rudin.
\newblock Idempotents in group algebras.
\newblock {\em Bull. Amer. Math. Soc.}, 69:224--227, 1963.

\bibitem{Salmi_strict}
P.~Salmi.
\newblock {Subgroups and strictly closed invariant C*-subalgebras}.
\newblock {\em arXiv e-prints}, page arXiv:1110.5459, Oct 2011.

\bibitem{Salmi-Skalski}
P.~Salmi and A.~Skalski.
\newblock Idempotent states on locally compact quantum groups.
\newblock {\em Q. J. Math.}, 63(4):1009--1032, 2012.

\bibitem{mmu}
P.~M. So\l{}tan and S.~L. Woronowicz.
\newblock From multiplicative unitaries to quantum groups. {II}.
\newblock {\em J. Funct. Anal.}, 252(1):42--67, 2007.

\bibitem{Stratila-Zsido}
\c{S}. Str\u{a}til\u{a} and L.~Zsid\'{o}.
\newblock {\em Lectures on von {N}eumann algebras}.
\newblock Editura Academiei, Bucharest; Abacus Press, Tunbridge Wells, 1979.
\newblock Revision of the 1975 original, Translated from the Romanian by Silviu
  Teleman.

\bibitem{Sun-Tserpes}
T.~Sun and N.~A. Tserpes.
\newblock Idempotent measures on locally compact semigroups.
\newblock {\em Z. Wahrscheinlichkeitstheorie und Verw. Gebiete}, 15:273--278,
  1970.

\bibitem{Taylor}
D.~C. Taylor.
\newblock The strict topology for double centralizer algebras.
\newblock {\em Trans. Amer. Math. Soc.}, 150:633--643, 1970.

\bibitem{unbo}
S.~L. Woronowicz.
\newblock Unbounded elements affiliated with {$C^*$}-algebras and noncompact
  quantum groups.
\newblock {\em Comm. Math. Phys.}, 136(2):399--432, 1991.

\end{thebibliography}
\end{document}